\newcolumntype{d}[1]{D{.}{.}{#1}}
\DeclareMathOperator{\mean}{E}
\DeclareMathOperator{\prob}{Pr}
\DeclareMathOperator{\divkl}{KL}
\DeclareMathOperator{\lpl}{LPL}
\DeclareMathOperator{\logit}{logit}
\DeclareMathOperator*{\amax}{arg\,max}
\newcommand{\R}{\mathbb{R}}
\newcommand{\sprod}[2]{\left\langle #1,#2\right\rangle}
\newcommand{\nseint}[1]{\overset{\circ}{n}_\text{s}(#1)}
\newcommand{\nbseint}[1]{\overset{\circ}{n}_{\text{s,b}}\left(#1\right)}
\newcommand{\nnbseint}[1]{\overset{\circ}{n}_{\text{s,nb}}\left(#1\right)}
\newcommand{\classname}[1]{\textsf{#1}}
\newcommand{\funname}[1]{\textsf{#1}}
\newtheorem{lemma}{Lemma}
\newtheorem{remark}{Remark}
\newtheorem{condition}{Condition}
\newtheorem{corollary}{Corollary}
\newtheorem{proposition}{Proposition}
\newtheorem{definition}{Definition}
\theoremstyle{definition}
\newtheorem{example}{Example}
\newcommand{\colhead}[1]{\multicolumn{1}{c}{#1}}
\DeclareMathOperator*{\argmax}{arg\,max}
\newcommand{\HRule}{\rule{\linewidth}{0.5mm}}
\newlength{\titleskip}
\begin{document}

\begin{titlepage}

\begin{center}      
\HRule \\[0.3cm] 
  \begin{spacing}{1.5}
         {\Large
          \textbf{         
          Pseudolikelihood inference for Gibbsian T-tessellations\ldots{} and point processes         
         }}
  \end{spacing}  
\HRule
\end{center}

\vspace{\titleskip}

\begin{center}
  \begin{spacing}{1.5}
        {\large 
         \textbf{
         Kiên \textsc{Kiêu} and Katarzyna \textsc{Adamczyk-Chauvat}
        }}
  \end{spacing}  
\end{center}

\vfill 
\noindent 
\textbf{Rapport technique 2015-2, novembre 2015
\\[2\baselineskip]
UR1404 Math\'ematiques et Informatique Appliqu\'ees du Génome à l'Environnement\\
INRA\\
Domaine de Vilvert\\
78352 Jouy-en-Josas Cedex\\
France\\
\url{http://maiage.jouy.inra.fr}}
\\[2\baselineskip]
{\small \copyright \, 2015 INRA
}
\begin{figure}[hb]
\includegraphics[width=50mm]{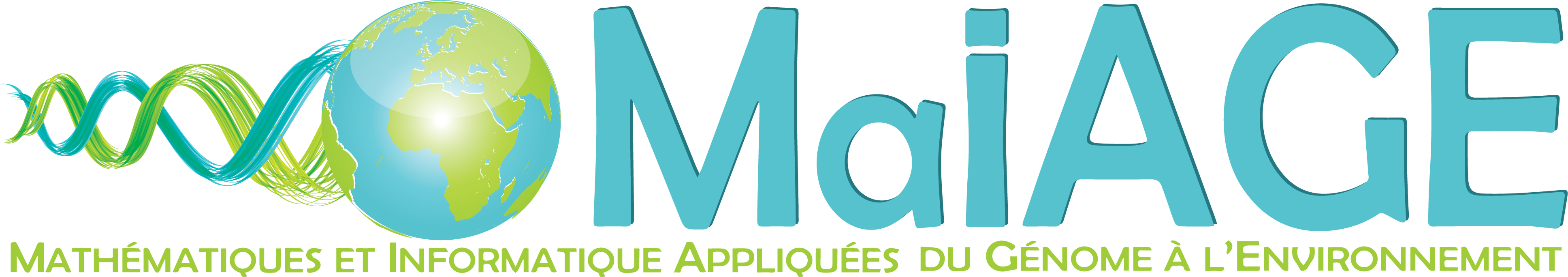}\hfill\includegraphics[width=30mm]{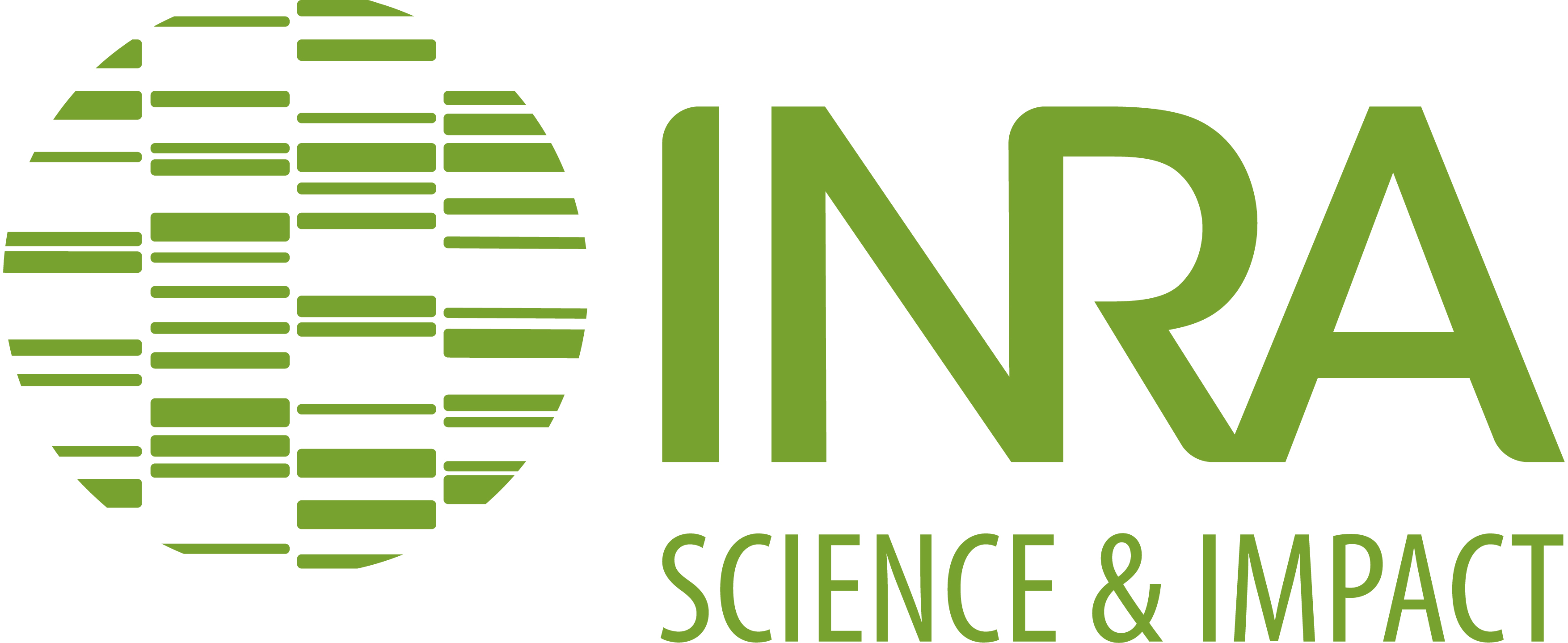}
\end{figure}
\end{titlepage}

\thispagestyle{plain}

\noindent 
Kiên \textsc{Kiêu}\\
UR1404 MaIAGE, INRA, domaine de Vilvert, 78352 Jouy-en-Josas Cedex\\
kien.kieu@jouy.inra.fr\\[\baselineskip]
Katarzyna \textsc{Adamczyk-Chauvat}\\
UR1404 MaIAGE, INRA, domaine de Vilvert, 78352 Jouy-en-Josas Cedex

\newpage

%
\renewcommand{\abstractname}{Abstract}
\begin{abstract}
  Recently a new class of planar tessellations, named T-tessellations, was introduced. Splits, merges and a third local modification named flip where shown to be sufficient for exploring the space of T-tessellations. Based on these local transformations and by analogy with point process theory, tools Campbell measures and a general simulation algorithm of Metropolis-Hastings-Green type were translated for random T-tessellations.

  The current report is concerned with parametric inference for Gibbs models of T-tessellations. The estimation criterion referred to as the pseudolikelihood is derived from Campbell measures of random T-tessellations and the Kullback-Leibler divergence. A detailed algorithm for approximating the pseudolikelihood maximum is provided. A simulation study seems to show that bias and variability of the pseudolikelihood maximum decrease when the tessellated domain grows in size.

  In the last part of the report, it is shown that an analogous approach based on the Campbell measure and the KL divergence when applied to point processes leads to the well-known pseudo-likelihood introduced by Besag. More surprisingly, the binomial regression method recently proposed by Baddeley and his co-authors for computing the pseudolikelihood maximum can be derived using the same approach starting from a slight modification of the Campbell measure.
\end{abstract}
\medskip
{\bf Keywords}: Stochastic geometry, tessellation, point process, pseudolikelihood, Campbell measure, Kullback-Leibler divergence.

\newpage

\vspace*{4cm}
\setcounter{tocdepth}{3}
\tableofcontents

\cleardoublepage
\section{Introduction}
\label{sec-1}

Recently, a new class of planar tessellations was introduced \cite{ref/1242}: Gibbsian T-tessellations. Briefly, a T-tessellation is a tessellation with only vertices at the intersection of three edges, two of them being aligned. In \cite{ref/1242}, it was shown that the space of T-tessellations can be explored using three simple and local operators: splits, merges and flips. A model was proposed to be considered as a completely random T-tessellation model (CRTT). And Gibbsian variants were discussed. Also tools and results such as Campbell measures, Papangelou kernels, Georgii-Nguyen-Zessin formulae were translated from point process theory to T-tessellations. A Metropolis-Hastings-Green algorithm based on splits, merges and flips was derived for the simulation of Gibbsian T-tessellations. Such an algorithm defines a Split, Merge and Flip (SMF) Markov chain converging to the Gibbs distribution to be simulated. As a companion tool, a C++ library named LiTe \cite{liteweb} was developped. Using LiTe, it is possible to simulate a rather large class of T-tessellation Gibbs models. 

The main purpose of this report is to introduce a method for estimating the parameters of a T-tessellation Gibbs model. This inference method is based on a so-called pseudolikelihood. The approach developped here involves Campbell measures and the Kullback-Leibler divergence. It is shown that the well-known pseudolikelihood introduced by Besag \cite{ref/1243} for point process can also be built from the Campbell measure and the Kullback-Leibler divergence.

The proposal for the pseudolikelihood for Gibbsian T-tessellations is provided in Section \ref{sec:pseudo:tessellations}. Section \ref{sec:ttessel:exponential} focuses on the special case of a canonical exponential family. Practical computation of the pseudolikelihood maximum is discussed in Section \ref{sec:nois:algorithm}. The behaviour of the maximum pseudolikelihood estimator is investigated based on some numerical simulations in Section \ref{sec:simulations}. Section \ref{sec:wrong:model} is devoted to an unexpected drawback of the pseudolikelihood. 

In Section \ref{sec:contrast:pl:point:processes}, it is shown that a similar approach based on the Campbell measure and the Kullback-Leibler divergence provides a new way to obtain the widely used pseudolikelihood for point processes. Even the recently logistic regression method for computing the pseudolikelihood estimator can be obtained in such a way as shown in Section \ref{sec:pp:logistic:regression}.

Appendix \ref{sec:kl} is a technical prerequisite where the Kullback-Leibler divergence is extended to non-probability measures. Appendix \ref{sec:subconfig:poisson} provides a result about the mean of a functional summed over all subpatterns of a finite Poisson point process. Such a result is used for proving that the distribution of a Gibbsian T-tessellation is fully characterized by its Papangelou conditional intensities (detailed proof provided in Appendix \ref{sec:charac:papangelou}).
\section{Pseudolikelihood for T-tessellations}
\label{sec-2}
\subsection{Gibbsian T-tesselllations}
\label{sec-2-1}

A general theory of planar T-tessellations was sketched in \cite{ref/1242}. At the time being, the theory is limited to bounded domains. Let $D\subset\mathbb{R}^2$ be a convex bounded polygon. Below, $D$ is referred to as the domain. The space of T-tessellations of $D$ is denoted by $\mathcal{T}$. A T-tessellation has only convex cells and can also be seen as a collection of segments. A segment is a maximal union of aligned and connected edges. There are two classes of segments: non-blocking (single edge) and blocking (multiple edges) segments. Three local operators allow to explore $\mathcal{T}$:
\begin{description}
\item[Split] a cell is divided by a new segment.
\item[Merge] a non-blocking segment is removed.
\item[Flip] an edge at the end of a blocking segment is removed while the incident blocked segment is extended.
\end{description}
Note that the inverse of a split is a merge (and vice versa), while the inverse of a flip is a flip. Below $\mathbb{S}_T$, $\mathbb{M}_T$ and $\mathbb{F}_T$ refer to the spaces of splits, merges and flips that can be applied to tessellation $T$. Note that $\mathbb{S}_T$ is continuous while $\mathbb{M}_T$ and $\mathbb{F}_T$ are finite. In \cite{ref/1242}, uniform measures with infinitesimal elements denoted by $ds$, $dm$ and $df$ are defined. The last two measures are just counting measures. The measure $ds$ has a total mass equal to the sum of the perimeters of $T$ cells, denoted by $u(T)$, divided by $\pi$. The probability measure $\pi ds/u(T)$ on $\mathbb{S}_T$ is characterized by the two following facts:
\begin{itemize}
\item Cells of $T$ are split with probabilities proportional to their perimeters.
\item Given a cell $C$ of $T$ is split, the splitting line is a uniform and isotropic random line hitting $C$.
\end{itemize}

The so-called CRTT model introduced in \cite{ref/1242} plays the role of the Poisson point process in point process theory. Its distribution is denoted by $\mu$. 
\begin{definition}\label{def:crtt}
  For a given non-negative real number $\nu$, the CRTT on $D$ with
  parameter $\nu$ is defined by the following distribution on
  $\mathcal{T}$:
  \begin{equation}
    \label{eq:def:crtt}
    \mu_\nu(dT) \propto I\{T\in \mathcal{T}(L)\} \frac{\nu^n}{n!}\;dl_1\ldots dl_n, \quad \forall L=\{l_1,\ldots,l_n\},
  \end{equation}
  where $\mathcal{T}(L)$ is the subset of T-tessellations with segments supported by the line pattern $L$.
\end{definition}
For sake of brevity, we write $\mu$ the distribution of the CRTT with parameter $\nu=1$. CRTT stands for Completely Random T-Tessellation.

All other models considered below have a density with respect to $\mu$. Their distributions can be written as
\begin{equation}
  \label{eq:density:wrt:mu}
  P(dT) = h(T)\;\mu(dT).
\end{equation}
In general, $h$ is known analytically only up to a multiplicative normalizing constant. Even for the CRTT where $h$ is constant, the normalizing constant is not known at the moment. This is why inference by maximum likelihood is not straightforward.

The density $h$ is said to be hereditary in the sense defined in \cite{ref/1242} if:
\begin{itemize}
\item For every pair $(T,s)$, $T\in\mathcal{T}$, $s\in\mathbb{S}_T$, $h(T)=0\Rightarrow h(sT)=0$.
\item For every pair $(T,f)$, $T\in\mathcal{T}$, $f\in\mathbb{F}_T$, $h(T)=0\Rightarrow h(fT)=0$.
\end{itemize}
The condition was erroneoulsy stated in \cite{ref/1242} with inversed implications.

In \cite{ref/1242}, two reduced Campbell measures for random T-tessellations were introduced.
The split Campbell measure of a random T-tessellation $\mathbf{T}$ maps any function $\phi:\mathscr{C}_\text{s}=\{(s,T):T\in\mathcal{T},s\in\mathbb{S}_T\}\rightarrow\R$ to
\begin{equation}
  \label{eq:split:campbell}
  C^!_{\text{s}}(\phi) = \mean
  \sum_{m\in\mathbb{M}_\mathbf{T}} \phi\left(m^{-1},m\mathbf{T}\right).
\end{equation}
The split Campbell measure is very similar to the Campbell measure of a point process. Indeed splitting and merging are analog to insertion and removal of points.

The flip Campbell measure is more specific to T-tessellations but its definition is formally very close to the definition of the split Campbell measure. The flip Campbell measure of $\mathbf{T}$ maps any function $\phi:\mathscr{C}_\text{f}=\{(f,T):T\in\mathcal{T},f\in\mathbb{F}_T\}\rightarrow\R$ to
\begin{equation}
  \label{eq:flip:campbell}
  C^!_{\text{f}}(\phi) = \mean
  \sum_{f\in\mathbb{F}_\mathbf{T}} \phi\left(f^{-1},f\mathbf{T}\right).
\end{equation}

It was shown in \cite{ref/1242} that if $\mathbf{T}\sim h\mu$ where $h$ is hereditary then
\begin{eqnarray}
  \label{eq:split:campbell:hereditary}
  C^!_{\text{s}}(ds,dT) & = & \lambda_{\text{s}}(s,T)\;dsP(dT),\\
  \label{eq:flip:campbell:hereditary}
  C^!_{\text{f}}(f,dT) & = & \lambda_{\text{f}}(f,T)\;dfP(dT),
\end{eqnarray}
where
\begin{eqnarray}
  \label{eq:split:papangelou}
  \lambda_{\text{s}}(s,T) & = & \frac{h(sT)}{h(T)},\\
  \label{eq:flip:papangelou}
  \lambda_{\text{f}}(f,T) & = & \frac{h(fT)}{h(T)},
\end{eqnarray}
for $P$-almost all $T\in\mathcal{T}$, almost all $s\in\mathbb{S}_T$ and all $f\in\mathbb{F}_T$. The densities $\lambda_{\text{s}}$ and $\lambda_{\text{f}}$ of the reduced Campbell measures are called the split and flip Papangelou conditional intensities of $\mathbf{T}$. Here we will need a third one: the merge Papangelou conditional intensity defined by
\begin{equation}
  \label{eq:def:merge:papangelou}
  \lambda_{\text{m}}(m,T)  =  \frac{h(mT)}{h(T)}.
\end{equation}
Note that
\begin{equation}
  \label{eq:rel:papangelou:split:merge}
  \lambda_{\text{m}}(m,T) = \lambda_{\text{s}}(m^{-1},mT)^{-1}.
\end{equation}
The split and flip Papangelou conditional intensities uniquely determine the density $h$ when it is hereditary. A proof of the result below is given in Appendix \ref{sec:charac:papangelou}.
\begin{proposition} \label{prop:sf:papangelou:determines:dist}
  Let $\mathbf{T}\sim P=h\mu$ where $h$ is supposed to be hereditary. The density $h$ is uniquely determined up to a $\mu$-null set by any pair $(\lambda_{\text{s}},\lambda_{\text{f}}) $ of split and flip Papangelou conditional intensities of $\mathbf{T}$.
\end{proposition}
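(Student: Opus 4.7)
The plan is to take two hereditary densities $h_1,h_2$ with respect to $\mu$ that share the same pair of split and flip Papangelou conditional intensities, and show that the ratio $r(T)=h_1(T)/h_2(T)$ is $\mu$-almost surely equal to a constant, which is then forced to equal $1$ by the normalization $\int h_1\,d\mu=\int h_2\,d\mu=1$. Throughout I write $P_i=h_i\mu$.

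The first step is to show that $r$ is invariant under each of the three SMF operators. From \eqref{eq:split:papangelou}, equality of split Papangelou intensities gives
\begin{equation*}
  \frac{h_1(sT)}{h_1(T)}=\lambda_{\text{s}}(s,T)=\frac{h_2(sT)}{h_2(T)},
\end{equation*}
so that $r(sT)=r(T)$ for $P_i$-a.a.\ $T$ and a.a.\ $s\in\mathbb{S}_T$. Identity \eqref{eq:rel:papangelou:split:merge} transports this to merges, and \eqref{eq:flip:papangelou} yields $r(fT)=r(T)$ for all $f\in\mathbb{F}_T$ on the same good set. Before iterating this, one must verify using the hereditary condition and the Papangelou ratios that the supports $\{h_1>0\}$ and $\{h_2>0\}$ coincide $\mu$-a.e.; heredity handles the upward direction (splits, flips) while the Papangelou ratio identity, read in reverse, handles merges.

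The second step is a connectivity argument. From the results of \cite{ref/1242}, any T-tessellation $T$ can be reduced to the trivial tessellation $T_0=\{D\}$ by a finite sequence of merges and flips: a non-blocking segment can be merged directly, and a blocking segment can be turned into a non-blocking one by a finite number of flips at its extremities. Chaining the invariance of $r$ along such a reduction path yields $r(T)=r(T_0)$ for $\mu$-a.e.\ $T$, so $r$ is $\mu$-a.s.\ constant; the normalization then forces $r\equiv 1$.

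The main obstacle is the measure-theoretic rigour in this second step. The identity $r(sT)=r(T)$ holds only on a measurable subset of full measure in $(s,T)$, and the reduction path has a length that depends on $T$. A Fubini-type argument, applied iteratively through the Campbell measure decompositions \eqref{eq:split:campbell:hereditary}--\eqref{eq:flip:campbell:hereditary} (and their merge analogue), is needed to guarantee that $\mu$-almost every $T$ admits \emph{some} finite reduction to $T_0$ entirely contained in the good set where $r$ is preserved at each step. Packaging this as an induction on the number $n$ of segments of $T$, together with measurable selection of a merge or flip at each stage, is where the real work lies; the algebraic identities above and the connectivity result of \cite{ref/1242} are then just the input.
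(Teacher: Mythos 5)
Your overall strategy is the same as the paper's: propagate the value of the density along a finite chain of merges and flips reducing $T$ to the trivial tessellation $\{D\}$, using the Papangelou ratios at each step, and fix the remaining multiplicative constant by normalization (the paper writes this as the explicit product $h(T)=h(D)\prod_i\lambda_{t_i}(o_i,T_i)$ rather than via the ratio $r=h_1/h_2$, but that difference is cosmetic). The problem is that the step you explicitly set aside as ``where the real work lies'' is not a routine Fubini iteration --- it is essentially the entire content of the paper's Appendix~\ref{sec:charac:papangelou}, and your proposal does not supply it. The difficulty you correctly identify (the identities $h(sT)=\lambda_{\text{s}}(s,T)h(T)$ and $h(fT)=\lambda_{\text{f}}(f,T)h(T)$ hold only off null sets $E_{\text{s}}$, $E_{\text{f}}$, while the reduction path has variable length and its intermediate tessellations are not distributed like $\mathbf{T}$) cannot be handled by an induction on the number of segments with a measurable selection of one good merge or flip per stage: you would need to control the law of the intermediate tessellations $T_1,\dots,T_n$, which the Campbell decompositions \eqref{eq:split:campbell:hereditary}--\eqref{eq:flip:campbell:hereditary} alone do not give you along an arbitrary chain.

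The paper's resolution is structurally different and is the missing idea. It defines the bad set in one shot as
\begin{equation*}
  G=\Bigl\{T:\ \exists\,\tilde T\in\textstyle\bigcup_{\tilde L\subset L(T)}\mathcal{T}(\tilde L),\ \exists\, o\in\mathbb{M}_{\tilde T}\cup\mathbb{F}_{\tilde T},\ (o^{-1},o\tilde T)\in E_{\text{s}}\cup E_{\text{f}}\Bigr\},
\end{equation*}
where the union over sub-line-patterns $\tilde L\subset L(T)$ is exactly the set of tessellations reachable from $T$ by merges and flips (merges delete supporting lines, flips preserve them). Membership outside $G$ therefore guarantees that \emph{every} step of \emph{every} reduction path is good --- no selection argument is needed. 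One then shows $\mu(G)=0$ by passing to the driving Poisson line process $\mathbf{L}$ of Definition~\ref{def:crtt} and invoking Lemma~\ref{lem:subconfig:poisson}, the identity $\mean\sum_{Y\subset\mathbf{X}}\phi(Y)=\exp(\nu(D))\mean\phi(\mathbf{X})$ for subconfigurations of a finite Poisson process, which converts the sum over sub-line-patterns into a single expectation and reduces everything to $\mu(F_{\text{s}})\leq\int I_{E_{\text{s}}}\,ds\,\mu(dT)=0$ (and likewise for flips). Without this closure-under-reduction definition of the exceptional set and the Poisson subconfiguration lemma (or an equivalent device), your argument does not go through; as written it is an accurate outline of the theorem's proof strategy with its central lemma left unproved.
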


As this report focuses on parametric inference, most often a parametric family of distributions $P_\theta$ on $\mathcal{T}$ will be considered. The issue adressed in this report is to estimate $\theta$ given a realization of $P_\theta$.

Since a general simulation algorithm of Metropolis-Hastings-Green is provided in \cite{ref/1242}, inference by Monte-Carlo Maximum Likelihood is possible, see e.g. \cite{ref/934}. But its practical implementation can be quite demanding. So easier and more automatized inference methods are of interest especially in contexts where estimation has to be repeatedly applied with different models or on large data sets.

The pseudolikelihood approach developped below is based on Campbell measures and Kullback-Leibler divergence. Using discretization as in \cite{ref/1243} for point processes is hardly concevable for tessellations.
\subsection{Pseudolikelihood definition and properties}
\label{sec-2-2}

\label{sec:pseudo:tessellations}
Consider a parametric family of distributions $P_\theta$ on $\mathcal{T}$, $\theta\in\Theta$. Every $P_\theta$ is supposed to be absolutely continuous with respect to $\mu$ and the densities of the $P_\theta$'s are denoted by $h_\theta$. The model is supposed to be identifiable:
\begin{equation}\label{eq:identifiability:tessel}
  \theta=\theta^* \Leftrightarrow h_\theta(T)=h_{\theta^*}(T)
  \text{ for }\mu\text{-almost all }T\in\mathcal{T}.
\end{equation}
The densities $h_\theta$ are supposed to be hereditary. The split, merge and flip Papangelou intensities are denoted $\lambda_{\text{s},\theta}$, $\lambda_{\text{m},\theta}$ and $\lambda_{\text{f},\theta}$. 

We are now ready for the pseudolikelihood definition.
\begin{definition}
  Given an observed T-tessellation $T$, the log-pseudolikelihood is defined as
\begin{multline}
  \lpl(\theta|T) = -
  \sum_{m\in\mathbb{M}_T} \log\lambda_{\text{m},\theta}\left(m,T\right) -
  \int_{\mathbb{S}_T} \lambda_{\text{s},\theta}\left(s,T\right)\;ds -\\
  \sum_{f\in\mathbb{F}_T} \log\lambda_{\text{f},\theta}\left(f,T\right) -
  \sum_{f\in\mathbb{F}_T} \lambda_{\text{f},\theta}\left(f,T\right).
  \label{eq:pseudo:tessel}
\end{multline}
  The maximum pseudolikelihood estimator is defined as
  \begin{equation}
    \label{eq:ttessel:max:pseudo}
    \hat{\theta} = \argmax_{\theta\in\Theta} \lpl(\theta|T).
  \end{equation}
\end{definition}

Similarly to the log-likelihood, the log-pseudolikelihood  can be derived as an empirical estimator of a contrast function. As a starting point, instead of $P_\theta$, consider the split and flip Campbell measures: 
\begin{eqnarray}
  \label{eq:split:campbell:parametric}
  C^!_{\text{s},\theta}(ds,dT) & = & \lambda_{\text{s},\theta}(s,T)\;dsP_\theta(dT),\\
  \label{eq:flip:campbell:parametric}
  C^!_{\text{f},\theta}(f,dT) & = & \lambda_{\text{f},\theta}(f,T)\;dfP_\theta(dT),
\end{eqnarray}
Compared to $P_\theta$, the Campbell measures have densities that do not involve unknown normalizing constants. It should be noticed however that the dominating measure above depends on $\theta$. This is why we introduce further measures closely related to the Campbell measures. For every pair $(\theta,\theta^*)$, consider the measures $\alpha_{\text{s},\theta,\theta^*}$ on $\mathscr{C}_{\text{s}}$ and $\alpha_{\text{f},\theta,\theta^*}$ on $\mathscr{C}_{\text{f}}$ defined by
\begin{eqnarray}
  \label{eq:def:alpha:split}
  \alpha_{\text{s},\theta,\theta^*}(ds,dT) & = &
  \lambda_{\text{s},\theta}(s,T)\;dsP_{\theta^*}(dT),\\
  \label{eq:def:alpha:flip}
  \alpha_{\text{f},\theta,\theta^*}(df,dT) & = &
  \lambda_{\text{f},\theta}(f,T)\;dfP_{\theta^*}(dT).
\end{eqnarray}
The measures $\alpha_{\text{s},\theta,\theta^*}$ and $\alpha_{\text{f},\theta,\theta^*}$ can be viewed as candidate Campbell measures. 
When $\theta=\theta^*$, they are indeed Campbell measures. 

For the time being, let us focus on the case where  $\alpha_{\text{s},\theta,\theta^*}\gg\alpha_{\text{s},\theta^*,\theta^*}$ and $\alpha_{\text{f},\theta,\theta^*}\gg\alpha_{\text{f},\theta^*,\theta^*}$, then
\begin{equation*}
  \frac{
    d\alpha_{\text{s},\theta^*,\theta^*}
  }{
    d\alpha_{\text{s},\theta,\theta^*}
  }(s,T) =
  \frac{\lambda_{\text{s},\theta^*}(s,T)}{\lambda_{\text{s},\theta}(s,T)},\quad
  \frac{
    d\alpha_{\text{f},\theta^*,\theta^*}
  }{
    d\alpha_{\text{f},\theta,\theta^*}
  }(f,T) =
  \frac{\lambda_{\text{f},\theta^*}(f,T)}{\lambda_{\text{f},\theta}(f,T)}.
\end{equation*}

It is standard in statistics to quantify the dissemblance between distributions using the Kullback-Leibler divergence.
Since the measures $\alpha_{\text{s},\theta,\theta^*}$ and $\alpha_{\text{f},\theta,\theta^*}$ are not probability measures, using the Kullback-Leibler divergence requires some adaptation. Appendix \ref{sec:kl} provides such an extension to non-negative non-probability measures. The extended Kullback-Leibler divergence of $\alpha_{\text{s},\theta,\theta^*}$ from $\alpha_{\text{s},\theta^*,\theta^*}$ is defined as
\begin{eqnarray*}
  \lefteqn{
    \divkl\left(\alpha_{\text{s},\theta^*,\theta^*},
    \alpha_{\text{s},\theta,\theta^*}\right)}\\
  & = &
  \int_{\mathscr{C}_\text{s}} \left(
    \frac{\lambda_{\text{s},\theta^*}(s,T)}{\lambda_{\text{s},\theta}(s,T)}
    \log\frac{\lambda_{\text{s},\theta^*}(s,T)}{\lambda_{\text{s},\theta}(s,T)} +
    1 - \frac{\lambda_{\text{s},\theta^*}(s,T)}{\lambda_{\text{s},\theta}(s,T)} \right)
  \;\alpha_{\text{s},\theta,\theta^*}(ds,dT)
\end{eqnarray*}
Note that for every $\theta\in\Theta$
\begin{equation*}
  \divkl\left(\alpha_{\text{s},\theta^*,\theta^*},
    \alpha_{\text{s},\theta,\theta^*}\right) \geq 0
\end{equation*}
and that the extended Kullback-Leibler divergence cancels for $\theta=\theta^*$.

Replacing in the right-hand side $\alpha_{\text{s},\theta,\theta^*}$ by its definition \eqref{eq:def:alpha:split}, one gets
\begin{eqnarray*}
  \lefteqn{
    \divkl\left(\alpha_{\text{s},\theta^*,\theta^*},
    \alpha_{\text{s},\theta,\theta^*}\right)}\\
  & = &
  \int_{\mathscr{C}_\text{s}} \left(
    \lambda_{\text{s},\theta^*}(s,T)
    \log\frac{\lambda_{\text{s},\theta^*}(s,T)}{\lambda_{\text{s},\theta}(s,T)} +
    \lambda_{\text{s},\theta}(s,T) - \lambda_{\text{s},\theta^*}(s,T) \right)
  \;dsP_{\theta^*}(dT).
\end{eqnarray*}
The first part of the integral can be rewritten using the expression of the split Campbell measure \eqref{eq:split:campbell:parametric}:
\begin{eqnarray*}
  \lefteqn{
    \divkl\left(\alpha_{\text{s},\theta^*,\theta^*},
    \alpha_{\text{s},\theta,\theta^*}\right)}\\
  & = &
  \int_{\mathscr{C}_\text{s}}
    \log\frac{\lambda_{\text{s},\theta^*}(s,T)}{\lambda_{\text{s},\theta}(s,T)}
  \;C^!_{\text{s},\theta^*}(ds,dT) +
  \int_{\mathscr{C}_\text{s}} \left(
    \lambda_{\text{s},\theta}(s,T) - \lambda_{\text{s},\theta^*}(s,T) \right)
  \;dsP_{\theta^*}(dT).
\end{eqnarray*}

Define
\begin{equation*}
  M_{\text{s}}(\theta,\theta^*) = \int_{\mathscr{C}_{\text{s}}} 
  \log\lambda_{\text{s},\theta}(s,T)\;C_{\text{s},\theta^*}^!(ds,dT) -
  \int_{\mathscr{C}_{\text{s}}} \lambda_{\text{s},\theta}(s,T)
  \; ds P_{\theta^*}(dT).
\end{equation*}
The extended Kullback-Leibler divergence of $\alpha_{\text{s},\theta,\theta^*}$ from $\alpha_{\text{s},\theta^*,\theta^*}$ can be written as
\begin{equation*}
  \divkl\left(\alpha_{\text{s},\theta^*,\theta^*},
    \alpha_{\text{s},\theta,\theta^*}\right) = 
  M_{\text{s}}(\theta^*,\theta^*) -
  M_{\text{s}}(\theta,\theta^*).
\end{equation*}
Only the term $M_{\text{s}}(\theta,\theta^*)$ is informative about the divergence of $\alpha_{\text{s},\theta,\theta^*}$ from $\alpha_{\text{s},\theta^*,\theta^*}$.  Note that $M_{\text{s}}$ does not have a constant sign. Instead, for any given $\theta^*$, $M_{\text{s}}(.,\theta^*)$ has a global maximum at $\theta^*$.

Now suppose that $\theta^*$ is unknown and that a realization $T$ of $P_{\theta^*}$ is available instead. In view of definition (\ref{eq:split:campbell}), the first integral of $M_{\text{s}}$ can be estimated by
\begin{equation*}
  \sum_{m\in\mathbb{M}_T} \log\lambda_{\text{s},\theta}(m^{-1},mT).
\end{equation*}
From the equivalence \eqref{eq:rel:papangelou:split:merge}, one can rewrite the estimator as
\begin{equation*}
  -\sum_{m\in\mathbb{M}_T} \log\lambda_{\text{m},\theta}(m,T).
\end{equation*}
Deriving an estimator of the second integral of $M_{\text{s}}$ is straightforward. The mean under $P_{\theta^*}$ of the integral
\begin{equation*}
  \int_{\mathbb{S}_T} \lambda_{\text{s},\theta}(s,T)
  \; ds
\end{equation*}
is equal to the second integral involved in the expression of $M_{\text{s}}(\theta,\theta^*)$ above.
  
Similarly considering the extended KL divergence of $\alpha_{\text{f},\theta,\theta^*}$ from $\alpha_{\text{f},\theta^*,\theta^*}$ leads to the function:
\begin{equation*}
  M_{\text{f}}(\theta,\theta^*) = \int_{\mathcal{T}}\int_{\mathbb{F}_T} 
  \log\lambda_{\text{f},\theta}(f,T)\;C_{\text{f},\theta^*}^!(df,dT) -
  \int_{\mathcal{T}}\int_{\mathbb{F}_T} \lambda_{\text{f},\theta}(f,T)
  \; df P_{\theta^*}(dT).
\end{equation*}
Furthermore, $M_{\text{f}}(\theta,\theta^*)$ can be estimated without bias by
\begin{equation*}
  -\sum_{f\in\mathbb{F}_T} \log\lambda_{\text{f},\theta}(f,T) -
  \sum_{f\in\mathbb{F}_T} \lambda_{\text{f},\theta}(f,T).
\end{equation*}

We can now state a first general result about the mean log-pseudolikelihood
\begin{proposition} \label{prop:ttessel:expr:mean:lpl}
If $\mathbf{T}\sim P_{\theta^*}$, then for every $\theta\in\Theta$
\begin{eqnarray}
  \nonumber
  \lefteqn{\mean_{\theta^*}\lpl(\theta|\mathbf{T})}\\
  & = &
  \label{eq:mean:lpl:ttessel:m}
  M_{\text{s}}(\theta,\theta^*)+
  M_{\text{f}}(\theta,\theta^*),\\
  & = &
  \label{eq:mean:lpl:ttessel:m:kl}
  M_{\text{s}}(\theta^*,\theta^*)+
  M_{\text{f}}(\theta^*,\theta^*)-
  \divkl\left(\alpha_{\text{s},\theta^*,\theta^*}, \alpha_{\text{s},\theta,\theta^*}\right) -
  \divkl\left(\alpha_{\text{f},\theta^*,\theta^*}, \alpha_{\text{f},\theta,\theta^*}\right).
\end{eqnarray}
\end{proposition}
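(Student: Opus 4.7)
The plan is to take the expectation of each of the four terms in $\lpl(\theta|\mathbf{T})$ separately under $P_{\theta^*}$, and match the result against the definitions of $M_{\text{s}}$ and $M_{\text{f}}$; then the second identity will follow from the KL calculation already carried out in the text.

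First I would handle the merge sum. Using \eqref{eq:rel:papangelou:split:merge}, rewrite
\begin{equation*}
  -\sum_{m\in\mathbb{M}_{\mathbf{T}}} \log\lambda_{\text{m},\theta}(m,\mathbf{T}) =
  \sum_{m\in\mathbb{M}_{\mathbf{T}}} \log\lambda_{\text{s},\theta}(m^{-1},m\mathbf{T}).
\end{equation*}
Taking the mean under $P_{\theta^*}$ and applying the definition \eqref{eq:split:campbell} of the split Campbell measure with $\phi(s,T)=\log\lambda_{\text{s},\theta}(s,T)$, this mean equals $\int_{\mathscr{C}_{\text{s}}}\log\lambda_{\text{s},\theta}(s,T)\,C^!_{\text{s},\theta^*}(ds,dT)$, which is the first term of $M_{\text{s}}(\theta,\theta^*)$. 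The second term $\int_{\mathbb{S}_{\mathbf{T}}}\lambda_{\text{s},\theta}(s,\mathbf{T})\,ds$ has expectation $\int\int\lambda_{\text{s},\theta}(s,T)\,ds\,P_{\theta^*}(dT)$ essentially by Fubini, and subtracting it reproduces $M_{\text{s}}(\theta,\theta^*)$ exactly.

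For the flip part I would proceed analogously: the mean of $-\sum_{f\in\mathbb{F}_{\mathbf{T}}}\log\lambda_{\text{f},\theta}(f,\mathbf{T})$ can either be treated directly by Fubini (since $\mathbb{F}_{\mathbf{T}}$ is a finite discrete set, no change-of-variable is needed), or, more uniformly, by using the definition \eqref{eq:flip:campbell} of the flip Campbell measure with $\phi(f,T) = \log\lambda_{\text{f},\theta}(f^{-1},f^{-1}T)$ — however, the cleanest route is to observe that under $P_{\theta^*}$ the Campbell formula \eqref{eq:flip:campbell:hereditary} immediately gives
\begin{equation*}
  \mean_{\theta^*}\sum_{f\in\mathbb{F}_{\mathbf{T}}} \log\lambda_{\text{f},\theta}(f,\mathbf{T})
  = \int_{\mathcal{T}}\int_{\mathbb{F}_T} \log\lambda_{\text{f},\theta}(f,T)\,\lambda_{\text{f},\theta^*}(f,T)\,df\,P_{\theta^*}(dT)
  = \int\int \log\lambda_{\text{f},\theta}(f,T)\,C^!_{\text{f},\theta^*}(df,dT),
\end{equation*}
and similarly for the $\sum_{f}\lambda_{\text{f},\theta}(f,\mathbf{T})$ term. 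Combining the two pieces yields $M_{\text{f}}(\theta,\theta^*)$, and summing with the split contribution proves \eqref{eq:mean:lpl:ttessel:m}.

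For the second identity \eqref{eq:mean:lpl:ttessel:m:kl}, I would simply invoke the calculation already performed in the paragraphs preceding the proposition, which establishes
\begin{equation*}
  \divkl(\alpha_{\text{s},\theta^*,\theta^*},\alpha_{\text{s},\theta,\theta^*}) = M_{\text{s}}(\theta^*,\theta^*) - M_{\text{s}}(\theta,\theta^*),
\end{equation*}
together with the analogous equality for the flip measures; substituting these into \eqref{eq:mean:lpl:ttessel:m} gives the desired form. The only genuine subtlety in the argument is the merge-to-split conversion in the first step, which relies on the identity \eqref{eq:rel:papangelou:split:merge} and on the bijection between $\mathbb{M}_{\mathbf{T}}$ and $\{(m^{-1},m\mathbf{T}):m\in\mathbb{M}_{\mathbf{T}}\}$ that underlies the definition of the split Campbell measure; once that is in place the rest is a matter of collecting the four terms. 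I expect no real obstacle beyond keeping track of signs and of the dependence of integration domains $\mathbb{S}_T$ and $\mathbb{F}_T$ on $T$, for which Fubini applies without difficulty since these are precisely what the Campbell measures are built to handle.
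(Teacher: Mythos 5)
Your derivation of the first identity is the right idea for the merge and split terms, but it goes wrong on the flip log term, and your proof of the second identity omits the case to which the paper's own proof is entirely devoted. On the flip term: the flip Campbell measure \eqref{eq:flip:campbell} evaluates $\phi$ at $(f^{-1},f\mathbf{T})$, not at $(f,\mathbf{T})$, so a change of variable \emph{is} needed, exactly as for merges. Since $\lambda_{\text{f},\theta}(f^{-1},fT)=\lambda_{\text{f},\theta}(f,T)^{-1}$ (the flip analogue of \eqref{eq:rel:papangelou:split:merge}), the correct identity is
\begin{equation*}
  \mean_{\theta^*}\Bigl[-\sum_{f\in\mathbb{F}_{\mathbf{T}}}\log\lambda_{\text{f},\theta}(f,\mathbf{T})\Bigr]
  = \int_{\mathscr{C}_{\text{f}}}\log\lambda_{\text{f},\theta}(f,T)\;C^!_{\text{f},\theta^*}(df,dT),
\end{equation*}
whereas your displayed equation asserts the same identity without the minus sign on the left; with your sign the flip contribution would come out as the \emph{negative} of the first term of $M_{\text{f}}$ and \eqref{eq:mean:lpl:ttessel:m} would fail. (Your alternative ``direct Fubini'' route gives $\int\int\log\lambda_{\text{f},\theta}(f,T)\,df\,P_{\theta^*}(dT)$, which is a valid identity but is not the Campbell-measure integral appearing in the definition of $M_{\text{f}}$.) The subtlety you single out as the only genuine one for merges has an exact counterpart for flips.

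The second, more structural gap: the relation $\divkl(\alpha_{\text{s},\theta^*,\theta^*},\alpha_{\text{s},\theta,\theta^*})=M_{\text{s}}(\theta^*,\theta^*)-M_{\text{s}}(\theta,\theta^*)$ that you invoke was established in the text only under the hypothesis $\alpha_{\text{s},\theta,\theta^*}\gg\alpha_{\text{s},\theta^*,\theta^*}$ (and likewise for flips). When domination fails, the extended KL divergence equals $+\infty$ by definition \eqref{eq:def:KL}, and the equality between \eqref{eq:mean:lpl:ttessel:m} and \eqref{eq:mean:lpl:ttessel:m:kl} must be checked separately: one shows that $\lambda_{\text{s},\theta}$ vanishes on a set of positive $C^!_{\text{s},\theta^*}$-measure, hence $M_{\text{s}}(\theta,\theta^*)=-\infty$ and both sides of \eqref{eq:mean:lpl:ttessel:m:kl} equal $-\infty$. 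This is precisely the content of the paper's proof --- the domination case is regarded there as already settled by the preceding discussion --- so your proposal fleshes out the part the paper takes for granted while skipping the one step it actually proves.
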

\begin{proof}
The Proposition has already been proved under the assumption that $\alpha_{\text{s},\theta,\theta^*}\gg\alpha_{\text{s},\theta^*,\theta^*}$ and $\alpha_{\text{f},\theta,\theta^*}\gg\alpha_{\text{f},\theta^*,\theta^*}$.
Let us consider the case where either $\alpha_{\text{s},\theta,\theta^*}$ does not dominate $\alpha_{\text{s},\theta^*,\theta^*}$ or $\alpha_{\text{f},\theta,\theta^*}$ does not dominate $\alpha_{\text{f},\theta^*,\theta^*}$. For instance, consider the first case. There exists a non-null subset, with respect to the measure $dsP_{\theta^*}(dT)$, of $\mathscr{C}_{\text{s}}$  where $\lambda_{\text{s},\theta}$ cancels while $\lambda_{\text{s},\theta^*}$ does not. In view of expression \eqref{eq:split:campbell:hereditary}, this is equivalent with the existence of a non-null subset, with respect to the measure $C_{\text{s},\theta^*}^!$, where $\lambda_{\text{s},\theta}$ cancels. Hence if $\alpha_{\text{s},\theta,\theta^*}$ does not dominate $\alpha_{\text{s},\theta^*,\theta^*}$, then $M_{\text{s}}(\theta,\theta^*)$ is equal to $-\infty$. Since, by definition, $\divkl\left(\alpha_{\text{s},\theta^*,\theta^*},\alpha_{\text{s},\theta,\theta^*}\right)=\infty$, the equality between (\ref{eq:mean:lpl:ttessel:m}) and  (\ref{eq:mean:lpl:ttessel:m:kl}) holds. A similar rationale applies for flips instead of splits.
\end{proof}
A straigthforward consequence of Proposition \ref{prop:ttessel:expr:mean:lpl} is the following.
\begin{corollary}
  If $\mathbf{T}\sim P_{\theta^*}$, then for every $\theta\in\Theta$,  
  \begin{equation*}
    \mean_{\theta^*} \lpl(\theta|\mathbf{T}) \leq
    \mean_{\theta^*} \lpl(\theta^*|\mathbf{T})
  \end{equation*}
\end{corollary}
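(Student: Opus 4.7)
The plan is to read off the corollary as an immediate consequence of Proposition \ref{prop:ttessel:expr:mean:lpl}, using the non-negativity of the extended Kullback--Leibler divergence and the fact that it vanishes when its two arguments coincide.

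More concretely, I would first apply the identity \eqref{eq:mean:lpl:ttessel:m:kl} to the parameter value $\theta = \theta^*$. Both extended KL terms then cancel, since $\divkl\left(\alpha_{\text{s},\theta^*,\theta^*},\alpha_{\text{s},\theta^*,\theta^*}\right)=0$ and similarly for the flip term; so $\mean_{\theta^*}\lpl(\theta^*|\mathbf{T}) = M_{\text{s}}(\theta^*,\theta^*)+M_{\text{f}}(\theta^*,\theta^*)$. Next, I would apply \eqref{eq:mean:lpl:ttessel:m:kl} again, this time at an arbitrary $\theta\in\Theta$, so that
\begin{equation*}
  \mean_{\theta^*}\lpl(\theta^*|\mathbf{T}) - \mean_{\theta^*}\lpl(\theta|\mathbf{T}) = \divkl\left(\alpha_{\text{s},\theta^*,\theta^*}, \alpha_{\text{s},\theta,\theta^*}\right) + \divkl\left(\alpha_{\text{f},\theta^*,\theta^*}, \alpha_{\text{f},\theta,\theta^*}\right).
\end{equation*}
Finally, I would invoke the non-negativity property of the extended KL divergence (the property explicitly noted just before Proposition \ref{prop:ttessel:expr:mean:lpl} and established more generally in Appendix \ref{sec:kl}), which shows that both summands on the right are non-negative, hence their sum is non-negative, which is exactly the claim.

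There is essentially no obstacle here: all the real content has already been packaged into Proposition \ref{prop:ttessel:expr:mean:lpl} and into the extension of the KL divergence to non-probability measures. The only mild subtlety is the case where one of the dominations $\alpha_{\text{s},\theta,\theta^*}\gg\alpha_{\text{s},\theta^*,\theta^*}$ or $\alpha_{\text{f},\theta,\theta^*}\gg\alpha_{\text{f},\theta^*,\theta^*}$ fails; but in that degenerate case the corresponding extended KL divergence is $+\infty$ (equivalently, $\mean_{\theta^*}\lpl(\theta|\mathbf{T}) = -\infty$ by the argument already used in the proof of the Proposition), so the inequality holds trivially. This handles all $\theta\in\Theta$.
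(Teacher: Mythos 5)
Your proposal is correct and matches the paper's own proof: both rewrite $\mean_{\theta^*}\lpl(\theta|\mathbf{T})$ via Proposition \ref{prop:ttessel:expr:mean:lpl} as $\mean_{\theta^*}\lpl(\theta^*|\mathbf{T})$ minus the two extended KL divergences, and conclude from their non-negativity. The degenerate (non-domination) case you mention is already absorbed into the convention $\divkl=+\infty$, exactly as in the paper.
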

\begin{proof}
In view of equations (\ref{eq:mean:lpl:ttessel:m}--\ref{eq:mean:lpl:ttessel:m:kl}), we have for every $\theta\in\Theta$
  \begin{equation} \label{eq:ttessel:mean:lpl:kl}
    \mean_{\theta^*} \lpl(\theta|\mathbf{T}) =
    \mean_{\theta^*} \lpl(\theta^*|\mathbf{T}) -
    \divkl\left(\alpha_{\text{s},\theta^*,\theta^*}, \alpha_{\text{s},\theta,\theta^*}\right) -
    \divkl\left(\alpha_{\text{f},\theta^*,\theta^*}, \alpha_{\text{f},\theta,\theta^*}\right).
  \end{equation}
  Since the extended Kullback-Leibler divergences are non-negative, the inequality follows.
\end{proof}
  
The following result shows that $-(M_{\text{s}}(.,\theta^*)+M_{\text{f}}(.,\theta^*))$ is a contrast function in the sense that its global minimum at $\theta^*$ is strict.
\begin{proposition}
  If $\mathbf{T}\sim P_{\theta^*}$, then $\theta^*$ is a strict global maximum of the mean log-pseudolikelihood:
  \begin{equation*}
    \forall \theta\in\Theta\setminus\{\theta^*\},\quad
    \mean_{\theta^*} \lpl(\theta|\mathbf{T}) <
    \mean_{\theta^*} \lpl(\theta^*|\mathbf{T}).
  \end{equation*}
\end{proposition}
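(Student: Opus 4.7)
The plan is to start from identity \eqref{eq:ttessel:mean:lpl:kl} from the proof of the preceding corollary, which decomposes
$$\mean_{\theta^*}\lpl(\theta^*|\mathbf{T})-\mean_{\theta^*}\lpl(\theta|\mathbf{T}) = \divkl\left(\alpha_{\text{s},\theta^*,\theta^*},\alpha_{\text{s},\theta,\theta^*}\right)+\divkl\left(\alpha_{\text{f},\theta^*,\theta^*},\alpha_{\text{f},\theta,\theta^*}\right).$$
Since both summands are non-negative, strict inequality for all $\theta\neq\theta^*$ reduces to showing that at least one of the two extended KL divergences is strictly positive whenever $\theta\neq\theta^*$. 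I will prove the contrapositive: if both divergences vanish, then $\theta=\theta^*$.

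The first step is to invoke the zero-divergence characterization established in Appendix \ref{sec:kl}: for the extended KL divergence of non-negative measures, $\divkl(\alpha_{\text{s},\theta^*,\theta^*},\alpha_{\text{s},\theta,\theta^*})=0$ forces $\alpha_{\text{s},\theta^*,\theta^*}=\alpha_{\text{s},\theta,\theta^*}$, i.e.
$$\lambda_{\text{s},\theta}(s,T)=\lambda_{\text{s},\theta^*}(s,T)\quad dsP_{\theta^*}(dT)\text{-almost everywhere},$$
and analogously for the flip Papangelou intensities. Thus the parametric models $P_\theta$ and $P_{\theta^*}$ share the same split and flip Papangelou conditional intensities (when viewed as intensities of $P_{\theta^*}$, since on a $P_{\theta^*}$-null set any modification is irrelevant).

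The second step is to apply Proposition \ref{prop:sf:papangelou:determines:dist}: the pair $(\lambda_{\text{s},\theta^*},\lambda_{\text{f},\theta^*})$ uniquely determines $h_{\theta^*}$ up to a $\mu$-null set, and the same pair equally serves as split and flip Papangelou intensities of $h_\theta$; hence $h_\theta=h_{\theta^*}$ $\mu$-almost everywhere. The identifiability assumption \eqref{eq:identifiability:tessel} then yields $\theta=\theta^*$, contradicting $\theta\neq\theta^*$, and thereby establishing the strict inequality.

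The delicate point will be the second step: the Papangelou intensities are a priori known to coincide only on the support of $P_{\theta^*}$, whereas Proposition \ref{prop:sf:papangelou:determines:dist} is framed in terms of uniqueness up to a $\mu$-null set. The bridge is provided by the hereditary property of $h_\theta$ and $h_{\theta^*}$, which lets one propagate the equality $h_\theta(T)/h_{\theta^*}(T)=\text{const}$ from the support of $P_{\theta^*}$ to all of $\mathcal{T}$ (modulo $\mu$-null sets) by chaining splits, merges and flips, precisely as in the proof of Proposition \ref{prop:sf:papangelou:determines:dist}. Once that subtlety is handled, identifiability closes the argument.
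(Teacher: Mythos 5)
Your proposal is correct and follows essentially the same route as the paper: reduce via \eqref{eq:ttessel:mean:lpl:kl} to showing that vanishing of both extended KL divergences forces the split and flip Papangelou intensities under $\theta$ and $\theta^*$ to coincide almost everywhere with respect to $dsP_{\theta^*}(dT)$ and $dfP_{\theta^*}(dT)$, then conclude with Proposition \ref{prop:sf:papangelou:determines:dist} and identifiability \eqref{eq:identifiability:tessel}. The one spot where the paper is slightly more careful than your appeal to Appendix \ref{sec:kl} is the passage from ``equal off an $\alpha_{\text{s},\theta,\theta^*}$-null set'' to ``equal off a $dsP_{\theta^*}(dT)$-null set'': it first rules out non-domination (infinite divergence) and then observes that on the set where $\lambda_{\text{s},\theta}$ vanishes, domination forces $\lambda_{\text{s},\theta^*}$ to vanish as well, which is exactly what justifies the conclusion you assert.
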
 
\begin{proof}
  It remains to show that
  \begin{equation*}
    \mean_{\theta^*} \lpl(\theta|\mathbf{T}) =
    \mean_{\theta^*} \lpl(\theta^*|\mathbf{T}) \Rightarrow
    \theta=\theta^*.
  \end{equation*}
  Consider a $\theta$ such that the mean log-pseudolikelihoods are equal. First, observe that in view of Equation \eqref{eq:ttessel:mean:lpl:kl}, both Kullback-Leibler divergences must cancel. This implies that $\alpha_{\text{s},\theta,\theta^*}$ (respectively $\alpha_{\text{f},\theta,\theta^*}$) must dominate $\alpha_{\text{s},\theta^*,\theta^*}$ (respectively $\alpha_{\text{f},\theta^*,\theta^*}$), otherwise one of the KL-divergences is infinite. Hence the mean log-pseudolikelihoods can be equal only if $\alpha_{\text{s},\theta,\theta^*}\gg \alpha_{\text{s},\theta^*,\theta^*}$ and $\alpha_{\text{f},\theta,\theta^*}\gg \alpha_{\text{f},\theta^*,\theta^*}$. From now on, we focus on that case.

  The Kullback-Leibler divergence of $\alpha_{\text{s},\theta,\theta^*}$ from $\alpha_{\text{s},\theta^*,\theta^*}$ cancels if and only if $\alpha_{\text{s},\theta,\theta^*}=\alpha_{\text{s},\theta^*,\theta^*}$ except on a $\alpha_{\text{s},\theta,\theta^*}$-null subset $E\subset\mathscr{C}_{\text{s}}$. In view of definition \eqref{eq:def:alpha:split}, $E$ is $\alpha_{\text{s},\theta,\theta^*}$-null if and only there exists a pair $(E_1,E_2)$ of subsets of $\mathscr{C}_{\text{s}}$ such that $E=E_1\cup E_2$, $E_1$ is null with respect to the measure $ds P_{\theta^*}(dT)$ and $\lambda_{\text{s},\theta}$ cancels on $E_2$. The domination $\alpha_{\text{s},\theta,\theta^*}\gg \alpha_{\text{s},\theta^*,\theta^*}$ implies that $\lambda_{\text{s},\theta^*}$ must cancel too on $E_2$. Therefore $\alpha_{\text{s},\theta,\theta^*}$ and $\alpha_{\text{s},\theta^*,\theta^*}$ coincide on $E_2$. More generally, both measures equal except on $E_1$ which is null with respect to $dsP_{\theta^*}(dT)$. In turn, this implies that $\lambda_{s,\theta}$ and $\lambda_{\text{s},\theta^*}$ equal for $P_{\theta^*}$-almost all $T\in\mathcal{T}$ and almost all splits $s\in\mathbb{S}_T$. Hence, $\lambda_{\text{s},\theta^*}$ is a split Papangelou conditional intensity for $P_{\theta^*}$.

  Similarly, it can be shown that $\lambda_{\text{f},\theta}$ is a flip Papangelou conditional intensity for $P_{\theta^*}$. From Proposition \ref{prop:sf:papangelou:determines:dist}, it follows that $h_\theta$ is a density of $P_{\theta^*}$ and that it equals $h_{\theta^*}$ $\mu$-almost everywhere. Since the model is supposed to be identifiable, $\theta$ must equal $\theta^*$.
\end{proof}
\subsection{The canonical exponential family case}
\label{sec-2-3}

\label{sec:ttessel:exponential}
Consider the case of a canonical exponential family:
\begin{equation}
  \label{eq:tessel:exponential:family}
  P_\theta(dT) \propto \exp\left(\theta^T t(T)\right)\, \mu(dT),
\end{equation}
wher $t(T)\in\R^d$ is a vector of statistics of tessellation $T$. Then we have
\begin{eqnarray*}
  \lambda_{\text{s},\theta}(s,T) & = &
  \exp\left(\theta^T t(s,T)\right),\quad t(s,T) = t(sT)-t(T),\\
  \lambda_{\text{m},\theta}(m,T) & = &
  \exp\left(\theta^T t(m,T)\right),\quad t(m,T) = t(mT)-t(T),\\
  \lambda_{\text{f},\theta}(f,T) & = &
  \exp\left(\theta^Tt(f,T)\right),\quad t(f,T) = t(fT)-t(T).
\end{eqnarray*}
The log-pseudolikelihood writes
\begin{multline}
  \label{eq:exponential:tessel:pseudo}
  \lpl(\theta|T) = -
  \theta^T \sum_{m\in\mathbb{M}_T} t\left(m,T\right) -
  \int_{\mathbb{S}_T} \exp\left(\theta^T t\left(s,T\right)\right)\; ds -\\
  \theta^T \sum_{f\in\mathbb{F}_T} t\left(f,T\right) -
  \sum_{f\in\mathbb{F}_T} \exp\left(\theta^T t\left(f,T\right)\right).
\end{multline}
The gradient of the log-pseudolikelihood has the following form
\begin{multline}
  \label{eq:exponential:tessel:pseudo:gradient}
  \nabla\lpl(\theta|T) = -
  \sum_{m\in\mathbb{M}_T} t\left(m,T\right) -
  \int_{\mathbb{S}_T} t(s,T)\exp\left(\theta^T t\left(s,T\right)\right)\; ds -\\
  \sum_{f\in\mathbb{F}_T} t\left(f,T\right) -
  \sum_{f\in\mathbb{F}_T} t(f,T)\exp\left(\theta^T t\left(f,T\right)\right).
\end{multline}
The Hessian matrix of $\lpl$ has elements of the form
\begin{multline*}
  H_{ij}(\lpl)(\theta|T) = 
  -\int_{\mathbb{S}_T} t_i(s,T)t_j(s,T)\exp(\theta^Tt(s,T))\;ds
  -\\\sum_{f\in\mathbb{F}_T} t_i(f,T)t_j(f,T)\exp(\theta^Tt(f,T)).
\end{multline*}
That is
\begin{multline*}
  H(\lpl)(\theta|T) = -
  \int_{\mathbb{S}_T}t(s,T)t(s,T)^T \exp(\theta^Tt(s,T))\;ds-\\
  \sum_{f\in\mathbb{F}_T}t(f,T)t(f,T)^T \exp(\theta^Tt(f,T))
\end{multline*}
As a consequence the log-pseudolikelihood is concave. Strict concavity holds if for every $y\in\R^d$
\begin{multline*}
  y^TH(\lpl)(\theta|T)y = - 
  \int_{\mathbb{S}_T} \left(y^Tt(s,T)\right)^2 \exp(\theta^Tt(s,T))\;ds - \\
  \sum_{f\in\mathbb{F}_T} \left(y^Tt(f,T)\right)^2 \exp(\theta^Tt(f,T)) < 0.
\end{multline*}
Define
\begin{equation}
  \label{eq:def:sstar:fstar}
  \mathbb{S}_T^*=\left\{s\in\mathbb{S}_T:t(s,T)>-\infty\right\},\quad
  \mathbb{F}_T^*=\left\{f\in\mathbb{F}_T:t(f,T)>-\infty\right\}.
\end{equation}
In the quadratic form above, the integral can be restricted to $\mathbb{S}_T^*$ and the sum to $\mathbb{F}_T^*$. The log-pseudolikelihood is strictly concave if at least one of the two following conditions is fulfilled.
\begin{condition}\label{cond:concavity:flip}
  The subset $t(\mathbb{F}_T^*,T)$ of $\R^d$ is not included in a vector hyperplane.
\end{condition}
\begin{condition}\label{cond:concavity:split}
  For any subset $B$ of $\mathbb{S}_T^*$ almost equal to $\mathbb{S}_T^*$, $t(B,T)$ is not included in a vector hyperplane of $\R^d$.
\end{condition}
Above $B$ is almost equal to $\mathbb{S}_T^*$ if and only if
\begin{equation*}
  \int_{\mathbb{S_T}} I_{\mathbb{S}_T^*\setminus B}(s)\;ds = 0.
\end{equation*}
Condition \ref{cond:concavity:flip} ensures that for every $y\in\R^d$, there exists at least a flip $f\in\mathbb{F}_T^*$ such $t(f,T)$ is not orthogonal to $y$. Similarly Condition \ref{cond:concavity:split} ensures that there exists a non-null subset of $\mathbb{S}_T^*$ where $t(s,T)$ is not orthogonal to $y$.
\begin{example}
  \label{example:crtt}
  Consider the distribution defined by the density
  \begin{equation}
    \label{def:model:crtt}
    h_\theta(T) \propto \exp\left(\nseint{T}\theta\right)
  \end{equation}
  where $\nseint{T}$ refers to the number of internal segments of $T$. Above the density $h_{\theta}$ is defined up to a unknown normalizing constant depending on $\theta$. This distribution defines a CRTT with intensity controlled by parameter $\theta$. In \cite{ref/1242}, the CRTT model is introduced together with another parametrization based on $\tau=\exp(\theta)$. The parameter $\tau$ can be considered as a scaling parameter: increasing $\tau$ is equivalent to increasing the tessellated domain $D$. Let us come back to parametrization \eqref{def:model:crtt} which defines a canonical exponential family with
  \begin{equation*}
    t(T) = \nseint{T},
  \end{equation*}
  and thus
  \begin{eqnarray*}
    t(s,T) & = & 1,\\
    t(m,T) & = & -1,\\
    t(f,T) & = & 0.
  \end{eqnarray*}
  The log-pseudolikelihood writes
  \begin{equation*}
     \lpl(\theta|T) = \left|\mathbb{M}_T\right|\theta-
     \frac{u(T)}{\pi}\exp(\theta) - \left|\mathbb{F}_T\right|.
  \end{equation*}
  The log-pseudolikelihood can be rewritten in terms of the number of internal segments of $T$:
  \begin{equation*}
     \lpl(\theta|T) = \nnbseint{T}\theta -
     \frac{u(T)}{\pi}\exp(\theta) - 2\nbseint{T},
  \end{equation*}
  where $\nnbseint{T}$ is the number of non-blocking internal segments of $T$ and $\nbseint{T}$ the number of blocking internal segments.
  The function above is strictly concave (negative second derivative). It has therefore a unique maximum. In order to identify the maximum, one solves
  \begin{equation*}
    \lpl'(\hat{\theta}|T) = \nnbseint{T} - \frac{u(T)}{\pi}\exp(\hat{\theta}) = 0. 
  \end{equation*}
  This yields
  \begin{equation*}
    \hat{\theta} = \log \frac{\nnbseint{T}\pi}{u(T)}.
  \end{equation*}
  That is
  \begin{equation*}
    \hat{\tau} = \frac{\nnbseint{T}\pi}{u(T)}.
  \end{equation*}

  Note that for the CRTT model, claimed to be the analog of the Poisson point process, the pseudolikelihood maximum estimator is not the same as the likelihood estimator. Indeed for the CRTT model the likelihood has the form
  \begin{equation*}
    \frac{\tau^{\nseint{T}}}{c(\tau)},
  \end{equation*}
  where $c(\tau)$ is a normalization constant (unknown so far). It follows that the likelihood maximum estimator of $\tau$ is a function (still unknown) of the number $\nseint{\tau}$ of internal segments of the tessellation $T$.

  The estimators derived from the likelihood and the pseudolikelihood do not coincide for completely random T-tessellations contrary to what happens for Poisson point processes. 
\end{example}
\begin{example}
  \label{example:areas}
  Consider the distribution defined (up to an unknown nomalizing constant) by the density
  \begin{equation}
    \label{eq:density:areas}
    h_\theta(T) \propto \exp\left(\nseint{T}\theta_1-a^2(T)\theta_2\right),
  \end{equation}
  where $a^2(T)$ is the sum of $T$ cell squared areas. Given the number of cells the statistics $a^2(T)$ is large when the cells have heterogeneous areas. Large values of $\theta_2$ favour tessellations with homogeneous cells (with respect to their areas). Again we have a canonical exponential family with
  \begin{equation*}
    t(T) = \begin{pmatrix} \nseint{T}\\ -a^2(T)\end{pmatrix}.
  \end{equation*}
  Expressions for $t(s,T)$, $t(m,T)$ and $t(f,T)$ can be worked out. But this does not lead to a simple analytical expression of the pseudolikelihood.
\end{example}
\subsection{Algorithm for approximating the pseudolikelihood maximum}
\label{sec-2-4}

\label{sec:nois:algorithm}
A simple approach for approximating the log-pseudolikelihood consists in discretizing the integral on $\mathbb{S}_T$ involved in the pseudolikelihood. Below we consider the framework of a canonical exponential family.
The log-pseudolikelihood and its gradient have the forms given by Equations \eqref{eq:exponential:tessel:pseudo} and \eqref{eq:exponential:tessel:pseudo:gradient}.

Let $S$ be a finite set of so-called \emph{dummy} splits of $T$. The number of elements of $S$ is denoted $|S|$. The log-pseudolikelihood \eqref{eq:exponential:tessel:pseudo} is replaced by
\begin{multline}
  \label{eq:exponential:tessel:discrete:pseudo}
  \lpl_{\text{d}}(\theta|T;S) = -
  \theta^T \sum_{m\in\mathbb{M}_T} t\left(m,T\right) -
  \frac{u(T)}{\pi|S|}
  \sum_{s\in S} \exp\left(\theta^T t\left(s,T\right)\right) -\\
  \theta^T \sum_{f\in\mathbb{F}_T} t\left(f,T\right) -
  \sum_{f\in\mathbb{F}_T} \exp\left(\theta^T t\left(f,T\right)\right).
\end{multline}
The approximation concerns only the integral over $\mathbb{S}_T$ involved in \eqref{eq:exponential:tessel:pseudo} which is replaced by a weighted discrete sum. All other terms involving merges and flips are exact.
Notice that if $S$ is a sample of splits drawn uniformly from $\mathbb{S}_T$, then $\lpl_{\text{d}}(\theta|T;S)$ is an unbiased estimator of $\lpl(\theta|T)$. 

\begin{remark}
In the special case of the CRTT model, the approximation is exact. Since $t(s,T)=1$ for any split $s$, the sum over $S$ in \eqref{eq:exponential:tessel:discrete:pseudo} writes
\begin{equation*}
  \sum_{s\in S} \exp\left(\theta^T t\left(s,T\right)\right) =
  \left|S\right| \exp\left(\theta\right).
\end{equation*}
Thus the term involving dummy splits becomes
\begin{equation*}
  \frac{u(T)}{\pi}\exp(\theta).
\end{equation*}
Compare with Example \ref{example:crtt}.
\end{remark}

A straightforward approximation of the log-pseudolikelihood gradient is  
\begin{multline}
  \label{eq:exponential:tessel:discrete:pseudo:gradient}
  \nabla\lpl_{\text{d}}(\theta|T;S) = -
  \sum_{m\in\mathbb{M}_T} t\left(m,T\right) -
  \frac{u(T)}{\pi|S|}
  \sum_{s\in S} t(s,T)\exp\left(\theta^T t\left(s,T\right)\right) -\\
  \sum_{f\in\mathbb{F}_T} t\left(f,T\right) -
  \sum_{f\in\mathbb{F}_T} t(f,T)\exp\left(\theta^T t\left(f,T\right)\right).
\end{multline}
Similarly, the log-pseudolikelihood Hessian can be approximated by
\begin{multline}
  \label{eq:exponential:tessel:discrete:pseudo:hessian}
  H(\lpl_\text{d})(\theta|T;S) = - \frac{u(T)}{\pi|S|}
  \sum_{S}t(s,T)t(s,T)^T \exp(\theta^Tt(s,T))-\\
  \sum_{f\in\mathbb{F}_T}t(f,T)t(f,T)^T \exp(\theta^Tt(f,T))
\end{multline} 

\begin{algorithm}
\caption{Algorithm NOIS (Newton optimization and increasing splitting) for approximating the pseudolikelihood maximum.}
\label{algo:nois}
\begin{algorithmic}[1]
  \Require a T-tessellation $T$, the stepsize $\epsilon$ to be used in $\theta$'s update, the tolerance parameter $\delta$ 
  \State $m\gets$ number of non-blocking segments of $T$
  \State $\theta\gets$ null vector
  \State Compute $\sum t(m,T)$ for $m\in\mathbb{M}_T$
  \State Compute $t(f,T)$ for all $f\in\mathbb{F}_T$
  \State Compute the sum of the $t(f,T)$'s
  \State $S\gets$ sample of  $m$ independent and uniform splits $s$ of $T$
  \State Compute the $t(s,T)$'s
  \Repeat
    \State $G\gets\nabla\lpl_{\text{d}}(\theta|T;S)$ as given by \eqref{eq:exponential:tessel:discrete:pseudo:gradient}
\State $H\gets H(\lpl_\text{d})(\theta|T;S)$ as given by 
\eqref{eq:exponential:tessel:discrete:pseudo:hessian}

    \State $\theta\gets\theta+\epsilon H^{-1}G$
    \State Add $m$ independent and uniform splits of $T$ to $S$
    \State Update the list of $t(s,T)$'s where $s\in S$
    \State $L\gets \lpl_{\text{d}}(\theta|T;S)$ as given by \eqref{eq:exponential:tessel:discrete:pseudo}
    \State $\Delta\gets$ variation of $L$
  \Until{$\left|\Delta\right| \leq
         \delta \left(\left|L\right| + \delta \right).$}
\end{algorithmic}
\end{algorithm}

The integral on splits in the log-pseudolikelihood \eqref{eq:exponential:tessel:pseudo} can be written as an expectation with respect to the uniform distribution on the space $\mathbb{S}_T$ of splits. The estimation criterion consists of deterministic terms and of an expectation with respect to a random variable that can be simulated. Stochastic approximation provides a number of algorithms for optimizing such type of criterion. However the present framework is rather simple here. The expectation is taken under a fixed distribution not depending on the searched parameter.

From stochastic approximation we keep the alternation between the update of the parameter and the estimation of the expectation. An iteration of the algorithm consists of the two following stages:
\begin{itemize}
\item Update of $\theta$ according to Newton's optimization method based on the log-pseudolikelihood Hessian approximation.
\item Increment of the sample $S$ of dummy splits.
\end{itemize}

The full procedure is detailed in Algorithm \ref{algo:nois}. It is referred to as NOIS (Newton optimization and increasing splitting) algorithm below.
Some remarks:
\begin{itemize}
\item The memory space used by the algorithm for storing the $t(s,T)$'s grows along iterations.
\item The time for computing
  \begin{equation*}
    \sum_{s\in S} \exp\left(\theta^T t\left(s,T\right)\right)
  \end{equation*}
  grows along iterations too.
\item One could add one dummy split to $S$ at each iteration, but the convergence of the sum on dummy splits is slow and the trajectories of the $\theta_n$'s are then rather irregular.
\item The stopping criterion is inspired by the R function optim.
\item When the parametric family includes the CRTT distributions, one may start with the maximum pseudolikelihood estimate under the CRTT model. Without loss of generality, one may assume that the first component of $t(T)$ is equal to $\nseint{T}$. The initial value of $\theta$ writes:
  \begin{equation*}
    \left(
          \log\left(\nnbseint{T}\pi\right)/u(T),0,\ldots,0
        \right)^T.
  \end{equation*}
\end{itemize}

\begin{example}
\label{example:angles}
Consider the model with the following non-normalized density 
\begin{equation}
  \label{eq:density:model:angles}
  h_\theta(T) \propto \exp
  \left(
    \theta_1\nseint{T} - 
    \theta_2\sum_{v\in V(T)}\left(\frac{\pi}{2}-\phi(v)\right)
  \right),
\end{equation}
where $0\leq\phi(v)\leq\pi/2$ denotes the acute angle between the two segments meeting at vertex $v$. When $\theta_2>0$, T-tessellations with almost rectangular cells are favoured by the model. A realization of such a model with $\theta_1=\numprint{2.49}$ and $\theta_2=\numprint{2.5}$ is shown in Figure \ref{fig:pseudo:discrete:angles} (top). Figure \ref{fig:pseudo:discrete:angles} (bottom) shows an example of parameter trajectory obtained with the NOIS algorithm. 
Some details:
\begin{itemize}
\item The observed tessellation was generated by a SMF Markov chain starting from the empty tessellation and after a burnin stage of \numprint{1000} iterations.
\item A sample of \numprint{1000} dummy splits was used for computing the discrete approximation of the pseudolikelihood and of its gradient.
\item As a starting point for the optimization, the pseudolikelihood estimator assuming the CRTT model was used.
\item Ten iterations have been performed before the stopping criterion was met with $\delta=\numprint{0.01}$. Since the T-tessellation used for inference had \numprint{31} non-blocking segments, the NOIS algorithm only used $10\times 31=310$ dummy splits.
\end{itemize}
\end{example}

\begin{figure}[ptbh]
\centering
\includegraphics[width=4.7in]{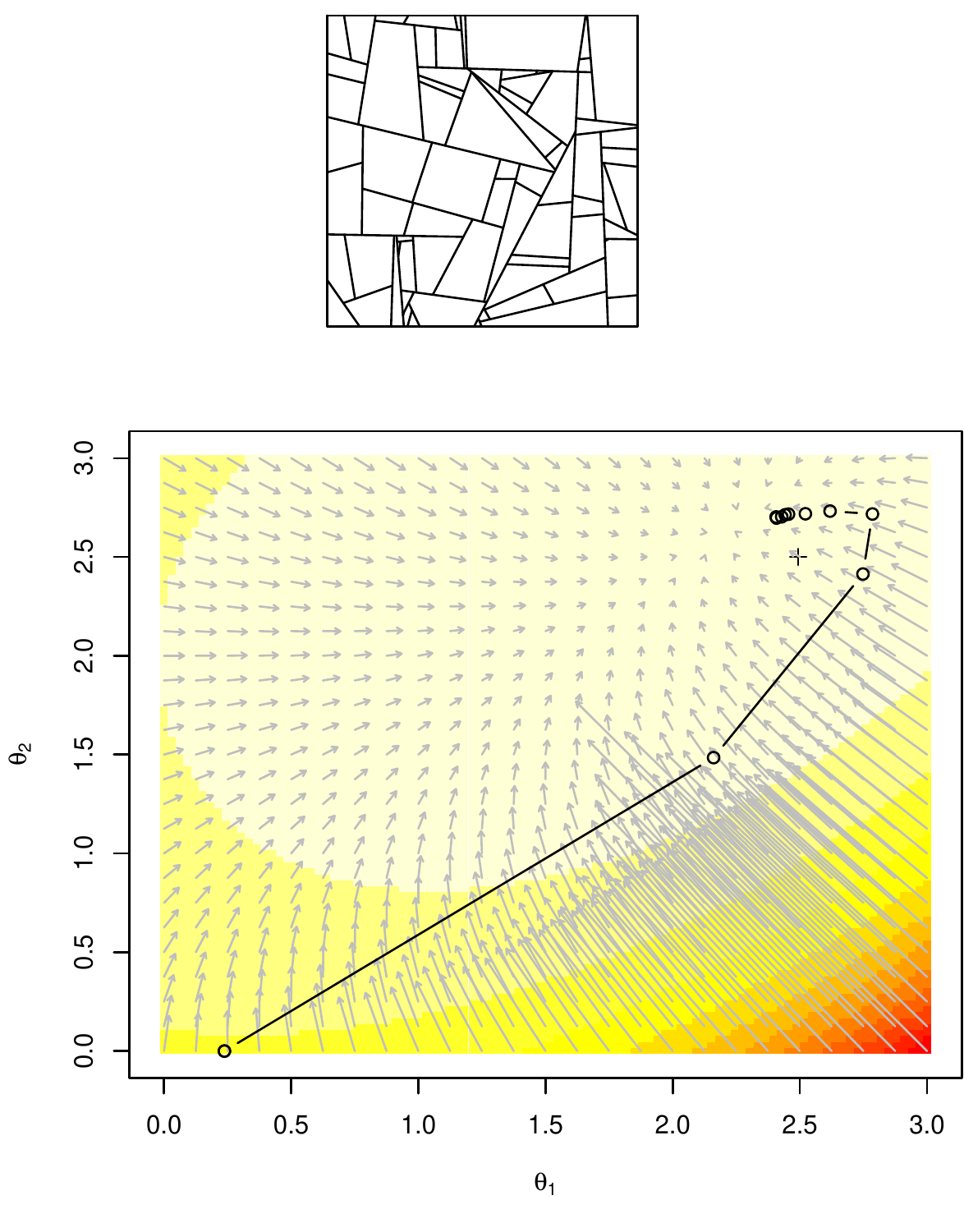}
\caption{\label{fig:pseudo:discrete:angles}Top: T-tessellation generated by the model with penalization on acute angles between segments. Bottom: maximum pseudolikelihood estimation. Background: discrete approximation of the log-pseudolikelihood (increasing from red to white). Grey arrows: approximated log-pseudolikelihood gradient. Circles: intermediate estimates found by NOIS algorithm. Cross: true parameters.}
\end{figure}
\subsection{Simulation study}
\label{sec-2-5}

\label{sec:simulations}
At the time being, it is not possible to investigate the asympotic behaviour of the maximum pseudolikelihood estimator. Gibbs models of T-tessellations are defined only in bounded domains. Extensions to the whole plane, existence and unicity of Gibbs measures are still open questions. In order to provide some hints on the behaviour of the maximum pseudolikelihood estimator, we proceeded to a simulation study. Three models of random T-tessellations and three domain sizes for each model are considered.

The first model is the CRTT model defined in Example \ref{example:crtt}. The second model is defined in Example \ref{example:angles} and favour T-tessellations with perpendicular segments. The last model favours T-tessellation with homogeneous cell areas and is defined in Example \ref{example:areas}. The three models are referred to as CRTT, angle and area models below. Parameter values are given in Table \ref{tab:sim:data}. Each model has been simulated on three squared domains with varying sizes. The domain sizes were chosen in order to emphasize their effect on the estimator precision. Domain side lengths are provided in Table \ref{tab:sim:data}.

\begin{table}[htb]
\caption{Simulation settings. For each model, $\theta$ values are given. Tessellated domains are squares with side lengths $w_1$, $w_2$ and $w_3$. Numbers of iterations for the burnin stage and between sampled tessellations (sampling period) are also provided.} \label{tab:sim:data}
\begin{center}
\begin{tabular}{r*{8}{d{1}}}
 model  &  \colhead{$\theta_1$}  &  \colhead{$\theta_2$}  &  \colhead{$w_1$}  &  \colhead{$w_2$}  &  \colhead{$w_3$}  &  \colhead{burnin}  &  \colhead{period}  \\
\hline
 CRTT   &                0.64  &                      &            2.5  &           1.75  &              1  &           12500  &            3704  \\
 angle  &                2.49  &                 2.5  &            2.5  &           1.75  &            1.4  &           30000  &            9473  \\
 area   &                0.53  &               835.2  &            2.8  &            1.9  &            1.5  &           11000  &            7223  \\
\end{tabular}
\end{center}
\end{table}

The maximum pseudolikelihood estimator has been computed on samples of 500 T-tessellations for each model and domain size. In order to generated the T-tessellation, SMF Markov chains has been  used as described in \cite{ref/1242}. As for any Metropolis-Hastings algorithm, burnin and sampling period (number of Markov chain iterations between consecutive sampled items) have to be determined.

The burnin duration was tuned empirically by monitoring the evolution of the model energy. It was chosen as the number of iterations required for energy stabilization (visual assessment). 

Concerning the sampling period, an automatic procedure was used. Along an SMF chain curse, segments are born (splits), change their length (flips) or die (merge). Thus a segment has a survival time expressed in terms of number of iterations. As a criterion, the renewal rate of segments was considered. The sampling period was chosen long enough so that \numprint[\%]{75} of segments living at the beginning of the period had died at the end of the period. Sampling periods were approximated using the RLiTe function \funname{getSMFSamplingPeriod}.

Note that burnin duration and sampling periods were tuned for each model on the largest domain to be considered. Simulations on smaller domains were performed using the SMF Markov chains with parameters chosen for the largest domains.

First sample times determined from the burnin duration and the sampling period are shown for the three models simulated in their largest domains in Figure \ref{fig:all:sampling}.

\begin{figure}[ptbh]
\centering
\includegraphics[width=4.8in]{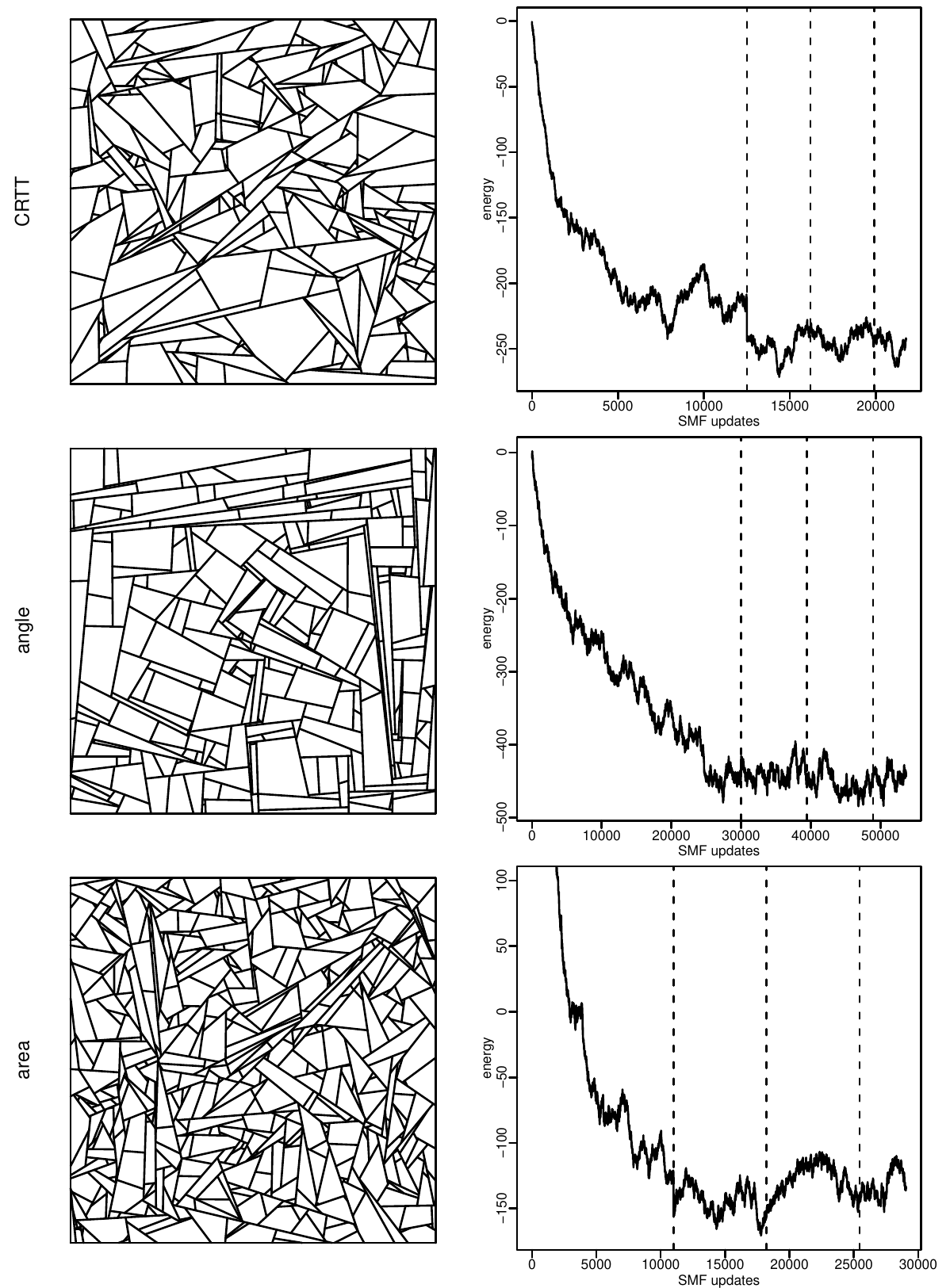}
\caption{\label{fig:all:sampling}Simulation of the CRTT (top row), angle (middle row) and area (bottom row) models. Left: first tessellation sampled at the end of the burnin stage. Right: energy evolution during the first SMF iterations. Vertical lines: iterations where tessellations are sampled.}
\end{figure}

\begin{table}[htb]
\caption{Estimation settings used by the NOIS algorithm. For each model, the values of the tolerance parameter $\delta$ and the maximal number of NOIS updates.} \label{tab:estim:data}
\begin{center}
\begin{tabular}{r*{2}{d{3}}}
 model  &  \colhead{tolerance}  &  \colhead{max.\ iterations}  \\
\hline
 CRTT   &                     &                         0  \\
 angle  &              0.005  &                       150  \\
 area   &             -0.005  &                       100  \\
\end{tabular}
\end{center}
\end{table}

In order to compute estimates using the LiTe class \classname{PLInferenceNOIS}, one needs to specify the tolerance and the maximal number of iterations. Values are given in Table \ref{tab:estim:data}. Estimation for the CRTT model is apart since the maximum pseudolikelihood has an explicit simple form. It can be computed using method \funname{Run} of class \classname{PLInferenceNOIS} with a null maximum number of iterations. The maximal number of iterations for the angle model was chosen large enough so that the stopping criterion could  be met in most cases. Concerning the area model, it turned out that the stopping criterion was met within very few iterations (sometimes only one for the smallest model). This resulted in rather biased estimates for the parameter $\theta_1$. This poor performance may be due to flat pseudolikelihood functions as illustrated below. In order to bypass this problem, the tolerance parameter $\delta$ is chosen negative so that the stopping criterion cannot be met and the maximum number of NOIS iterations is chosen large.

\begin{figure}[ptbh]
\centering
\includegraphics[width=5in]{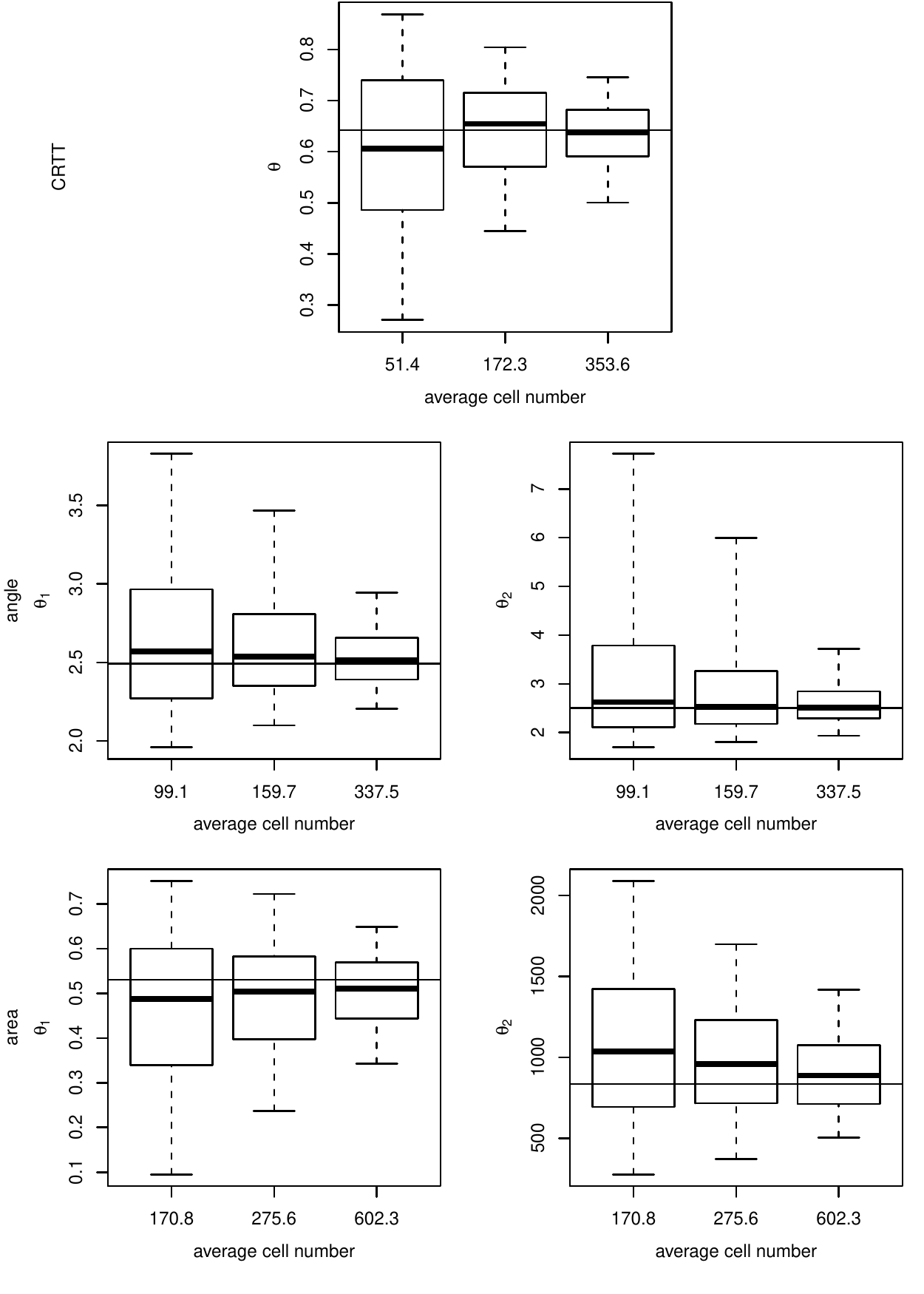}
\caption{\label{fig:boxplot:estimates:all}Maximum pseudolikelihood estimates (500 replicates) for the three models (CRTT, angle and area). Abscissa: mean number of cells for each of the three domain size. For each model and domain size, the estimate distribution is represented as a boxplot. First and last deciles are shown as the whisker ends. True parameter values represented as horizontal lines.}
\end{figure}

Estimate distributions for the CRTT model are shown in Figure \ref{fig:boxplot:estimates:all} (top row) as boxplots. Dispersion is moderate and decreases when the domain size increases. Small bias: it is not visible as soon as the total number of cells is larger than a hundred. For fifty cells (smallest domain size), a small negative bias cannot be excluded.

Estimate distributions for the angle model are shown in Figure \ref{fig:boxplot:estimates:all} (middle row). As for the CRTT model, the estimate dispersion and bias seem to decrease when the domain size increases. The estimate distributions show some asymmetry.

Let us consider the area model. As mentionned above, the stopping criterion of the NOIS algorithm does not seem to work for the area model. This problem is illustrated based on a simulated tessellation. The pseudolikelihood is approximated using \numprint{5000} dummy splits, see Figure \ref{fig:areas:flat:pseudo}. From now on this approximation is considered as exact. In the region around the maximum, the log-pseudolikelihood is rather flat: relative variations of the log-pseudolikelihood are small. As a consequence, the NOIS algorithm stops too early and find an estimate a little bit too far from the true maximum. A possible enhancement of the stopping criterion: stop when the pseudolikelihood and the estimates do not vary enough any more. An alternative solution could be to reparametrize the density \eqref{eq:density:areas} in order to get two statistics of $T$ with more comparable values.

\begin{figure}[htbp]
\centering
\includegraphics[width=4in]{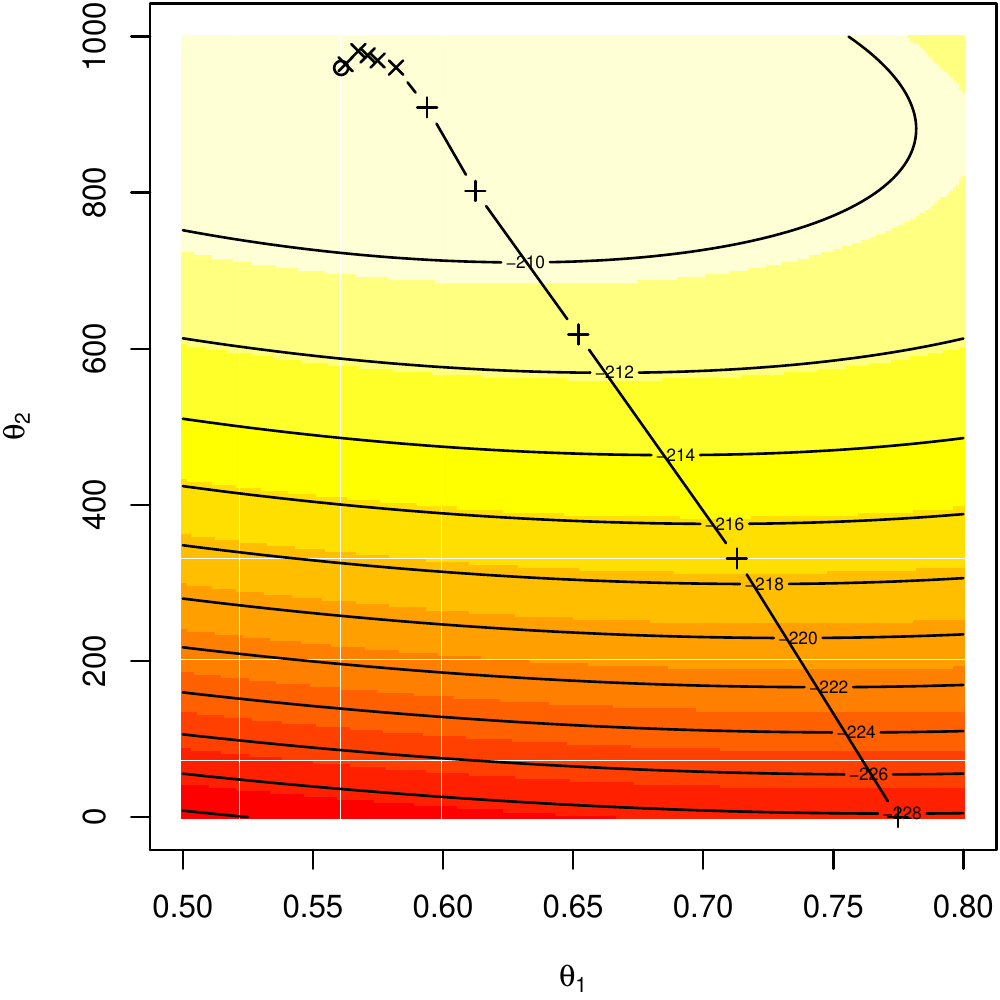}
\caption{\label{fig:areas:flat:pseudo}Pseudolikelihood of a simulated tessellation under the area model. Intermediate estimates are shown as plus signs when found before the stopping criterion was fulfilled. Further estimates are shown as crosses. The location of the true maximum is shown as a circle.}
\end{figure}

Estimation for the area model behavess like the two previous models, see Figure \ref{fig:boxplot:estimates:all} (bottom row). Both dispersion and bias decrease when the domain size increases.
\subsection{Fitting a wrong model}
\label{sec-2-6}

\label{sec:wrong:model}
Let us consider an observed T-tessellation approximating agricultural plots located around Selommes village (Loir-et-Cher department, France). This tessellation is shown in Figure \ref{fig:wrong:model} (center). The tessellated domain has a width of about \numprint[km]{5.3}. There are 211 agricultural plots (cells). Choosing a good model for representing such a tessellation is an open research problem far beyond the scope of this paper. As a toy model, consider the CRTT model. It is not plausible but rather simple. Our observed T-tessellation has 86 internal non-blocking segments. The sum of all cell perimeters is equal to \numprint[km]{213,2}. Thus the estimate of $\theta$ is equal to $\hat{\theta}=\numprint{0.24}$.

\begin{figure}[p]
\centering
\includegraphics[width=5in]{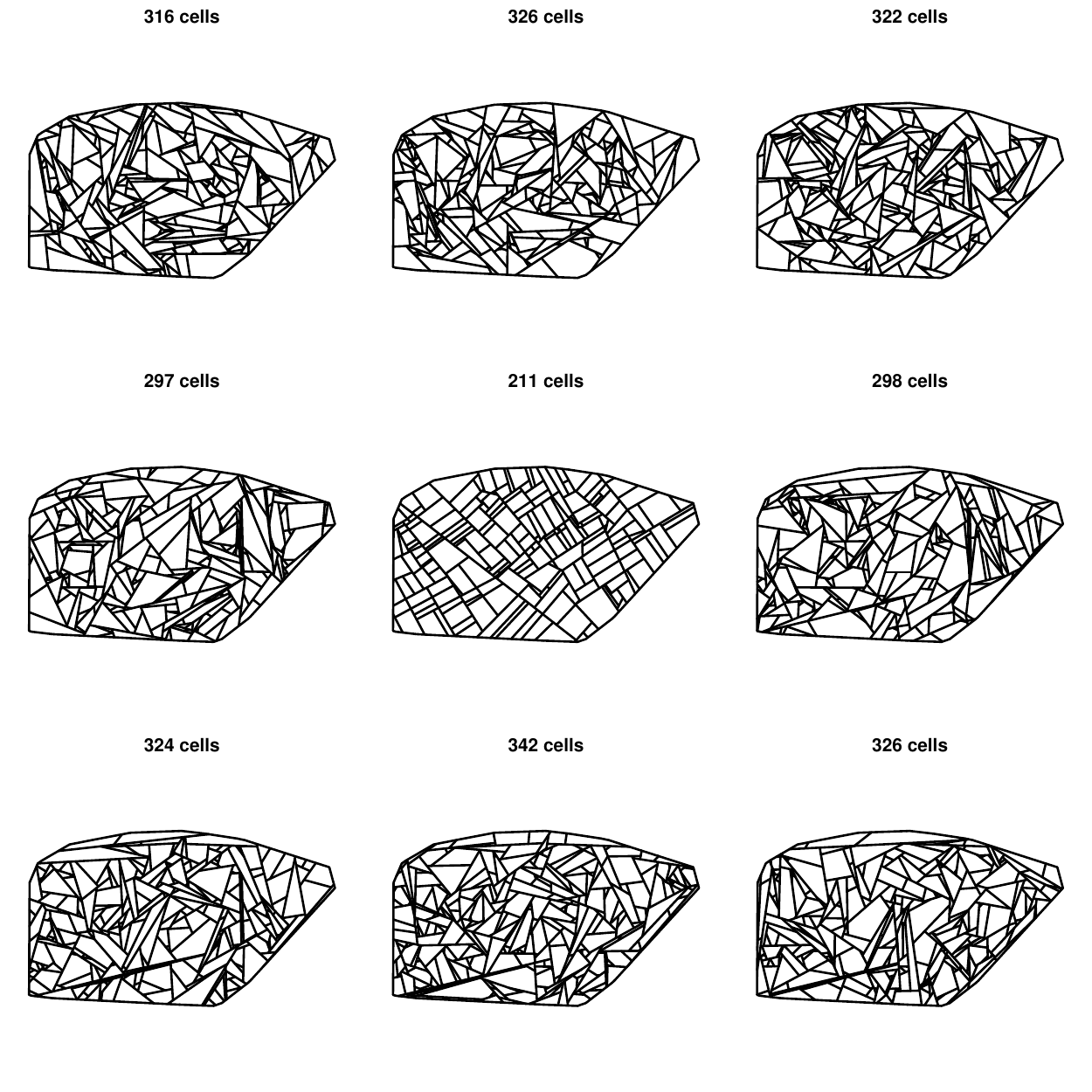}
\caption{\label{fig:wrong:model}Center: a T-tessellation supposed to be a realization of the CRTT model with unknown parameter. Around: realizations of the CRTT model fitted using the maximum pseudolikelihood estimator.}
\end{figure}

Simulations of T-tessellations under the CRTT model with parameter $\theta=\numprint{0.24}$ are shown in Figure \ref{fig:wrong:model}. The most striking feature of simulated tessellations is their heterogeneity compared to the observed T-tessellation. This is expected since the CRTT model yields completely random T-tessellation while the observed tessellation shows some kind of regularity. But there is another important difference between the observed and the simulated tessellations: there are more cells in the latter. 

This difference is unexpected. Such a behaviour does not occur with point processes. In the point process framework, the situation is as follows. Consider a given point pattern of say $n$ points. Suppose that the homogeneous Poisson model is chosen for fitting. The intensity parameter is estimated by $n$ divided by the area of the considered domain. This estimate is obtained both as the maximum pseudolikelihood and as the maximum likelihood estimators. Therefore simulating the fitted Poisson model would yield point patterns of size $n$ in expectation. Somehow pseudolikelihood fitting leads to the best Poisson point process representing the observed point pattern. In the T-tessellation framework, the fitted model seems to be biased for reasons that remain to be investigated.
\section{Pseudolikelihood for point processes}
\label{sec-3}

\label{sec:pl:point:processes}
Pseudolikelihood was first introduced by Besag for spatial random fields with discrete support \cite{ref/1243}, later for a Strauss point process \cite{ref/419}. The extension to a rather general class of models for point processes was provided by Jensen and Møller \cite{ref/1100}. General convergence results (consistancy, central limit theorem) were obtained by  Billiot et al. \cite{ref/1245}. In this report, we focus on point processes within a bounded domain denoted by $D$. The way the pseudolikelihood is introduced here is original. Previously the point process was discretized by countings on a mesh (a square lattice for example). This yields a random field with discrete support. By letting the tile size decrease to zero, one obtains the point process pseudolikelihood as a limit of the random field pseudolikelihood (the limit does not depend on the considered mesh). Similarly to random T-tessellations, one can derive the pseudolikelihood starting from the Kullback-Leibler divergence and candidate Campbell measures. Such an approach is developed below. It should be noticed that inference based on pseudolikelihood can also be considered as a particular instance of large class of methods so-called Takacs-Fiksel methods \cite{ref/1246}.

Below only the main steps of the pseudolikelihood derivations are sketched. Full details can be recovered by analogy with the pseudolikelihood derivation for random T-tessellations. 
\subsection{Finite point processes}
\label{sec-3-1}

Let $\mathscr{D}$ denote the space of finite point patterns in a planar compact domain $D$. Considering only bounded domains simplifies the presentation and makes the analogy with random T-tessellations more straightforward.

Any point process $\mathbf{X}$ defines a reduced Campbell measure $C^!$ on $D\times\mathscr{D}$ which maps any measurable function $\phi:D\times\mathscr{D}\rightarrow\R$ to
\begin{equation}
\label{eq:def:campbell}
C^!(\phi) = \mean \sum_{x\in\mathbf{X}} \phi(x,\mathbf{X}\setminus\{x\}). 
\end{equation}
The Campbell measure above is very similar to the split Campbell measure defined for random T-tessellations. A split of a T-tessellation is the insertion of a non-blocking segment. Its analog for point patterns is the insertion of a new point. A merge of a T-tessellation is the removal of a non-blocking segment. Its analog for point patterns is the removal of an (arbitrary) point of the pattern.

Let $\mu$ denote the distribution of a Poisson point process with intensity 1 on $D$. Consider a distribution  $P\ll\mu$ on $\mathscr{D}$ with density $h$ :
\begin{equation*}
  P(dX) = h(X)\; \mu(dX).
\end{equation*}
Let us suppose that $h$ is hereditary :
\begin{equation}
  \label{eq:hereditary}
  \forall x\in D, \forall X\in\mathscr{D},\quad
  h(X)=0 \Rightarrow h(X\cup\{x\})=0.
\end{equation}
Then the reduced Campbell measure is absolutely continuous with respect to the product of the Lebesgue measure on $\R^2$ with $P$ and its density denoted $\lambda$ with respect to the product measure is defined as the Papangelou conditional intensity given by
\begin{equation*}
  C^!(dx,dX) = \lambda(x;X)\;dx P(dX) 
\end{equation*}
The Papangelou conditional intensity has the following expression
\begin{equation}
  \label{eq:papangelou:hereditary}
  \lambda(x;X) = \frac{h(X\cup\{x\})}{h(X)}.
\end{equation}
Note that one can find in the literature expressions of the Papangelou conditional intensity slightly different from \eqref{eq:papangelou:hereditary}. For instance, for $x\in X$, Baddeley and Turner \cite{ref/758} define $\lambda(x;X)$ as $h(X)/h(X\setminus x)$ instead of 1. The different versions of the Papangelou conditional intensity are consistent with \eqref{eq:papangelou:hereditary} as long as they coincide with \eqref{eq:papangelou:hereditary} almost-everywhere with respect to the product of the Lebesgue measure on $\R^2$ with $P$. 
\subsection{Contrast function and pseudolikelihood}
\label{sec-3-2}

\label{sec:contrast:pl:point:processes}
Consider a parametric family of distributions $P_\theta$ on $\mathscr{D}$. It is supposed that every  $P_\theta\ll\mu$ and has a hereditary density $h_\theta$ with respect to $\mu$. Also the model is supposed to be identifiable. The associated Papangelou conditional intensities are denoted $\lambda_\theta$. For every pair $(\theta,\theta^*)$, consider the measure $\alpha_{\theta,\theta^*}$ on $D\times\mathscr{D}$ defined by
\begin{equation}
  \label{eq:def:alpha}
  \alpha_{\theta,\theta^*}(dx,dX) = \lambda_\theta(x,X)\;dxP_{\theta^*}(dX).
\end{equation}

The Kullback-Leibler divergence of $\alpha_{\theta,\theta^*}$ from $\alpha_{\theta^*,\theta^*}$ can be expressed as
\begin{equation*}
  \divkl\left(\alpha_{\theta^*,\theta^*},\alpha_{\theta,\theta^*}\right) =
  M(\theta^*,\theta^*)-M(\theta,\theta^*),
\end{equation*}
where
\begin{equation}
  \label{eq:contrast:campbell:kl:pp}
  M(\theta,\theta^*) =
  \int_{D\times\mathscr{D}} \log\lambda_\theta(x,X)\; C^!_{\theta^*}(dx,dX) -
  \int_{D\times\mathscr{D}} \lambda_\theta(x,X)\;dxP_{\theta^*}(dX)
\end{equation}
Note that $M(.,\theta^*)$ has a global maximum at $\theta=\theta^*$. Similarly to random T-tessellations, it can be shown that the maximum is strict.

Let $X$ be a realization of $\mathbf{X}\sim P_{\theta^*}$, then
\begin{equation}
  \label{eq:lpl:pp}
  \lpl(\theta|X) =
  \sum_{x\in X}  \log\lambda_\theta(x,X\setminus\{x\}) -
  \int_D \lambda_\theta(x,X)\;dx
\end{equation}
is an unbiased estimator of $M(\theta,\theta^*)$. Equation \eqref{eq:lpl:pp} coincides with the log-pseudolikelihood introduced by Besag \cite{ref/1243, ref/419, ref/1100} for point processes. The maximum pseudolikelihood estimator is
\begin{equation*}
  \hat{\theta} = \amax_\theta \lpl(\theta|X).
\end{equation*}

Generally no analytical expression of the integral involved in the log-pseudolikelihood \eqref{eq:lpl:pp} is available. Thus computing $\hat{\theta}$ is not straightforward. The most commly used method for computing $\hat{\theta}$ is the Poisson regression method. This method was first proposed by  Berman and Turner \cite{ref/1244} for the inference of heterogeneous Poisson point processes. The extension to a larger class of Gibbs processes was developped by Baddeley and Turner \cite{ref/758}. 
\subsection{A weaker but more tractable contrast function}
\label{sec-3-3}

\label{sec:pp:logistic:regression}
Below we introduce a family of contrast functions. They can be considered as approximations of the contrast function $-M$ associated with the pseudolikelihood. Their main feature is that their minimum is easily computed using standard tools of statistics. 

Let $\rho$ be an integrable function on $D$. In practice, $\rho$ may be taken proportional to the indicator function of $D$. Define
\begin{equation}
  \label{eq:def:alpha:rho}
  \alpha^\rho_{\theta,\theta^*}(dx, dX) = \left(
    \lambda_\theta(x,X)+\rho(x)
  \right)\;
  dx P_{\theta^*}(dX).
\end{equation}
When $\rho\equiv 0$, the measure defined in \eqref{eq:def:alpha:rho} is just $\alpha_{\theta,\theta^*}$ as defined in Equation \eqref{eq:def:alpha}.
Consider the function
\begin{equation}
  \label{eq:magic:contrast:pp}
  \divkl\left(\alpha_{\theta^*,\theta^*},\alpha_{\theta,\theta^*}\right) -
  \divkl\left(
    \alpha^\rho_{\theta^*,\theta^*},\alpha^\rho_{\theta,\theta^*}
  \right).
\end{equation}
This function cancels at $\theta=\theta^*$ and it is non-negative according to Lemma \ref{lemma:divergence:shift}. The following inequalities hold:
\begin{equation*}
  0 \leq 
  \divkl\left(\alpha_{\theta^*,\theta^*},\alpha_{\theta,\theta^*}\right) -
  \divkl\left(
    \alpha^\rho_{\theta^*,\theta^*},\alpha^\rho_{\theta,\theta^*}
  \right)
  \leq 
  \divkl\left(\alpha_{\theta^*,\theta^*},\alpha_{\theta,\theta^*}\right).
\end{equation*}
Heuristically, the function \eqref{eq:magic:contrast:pp} tends to the upper bound when  $\rho$ tends to infinity while it tends to the lower bound when $\rho$ tends to zero. Hence the function \eqref{eq:magic:contrast:pp} of $\theta$
\begin{itemize}
\item Approximates $\divkl\left(\alpha_{\theta^*,\theta^*},\alpha_{\theta,\theta^*}\right)$ as $\rho$ tends to infinity.
\item Is less discriminative than $\divkl\left(\alpha_{\theta^*,\theta^*},\alpha_{\theta,\theta^*}\right)$ for a finite $\rho$.
\end{itemize}

Simple computations show that up to some additive terms not depending on $\theta$ the KL-divergence can be written as
\begin{multline*}
  \divkl\left(
    \alpha^\rho_{\theta^*,\theta^*},\alpha^\rho_{\theta,\theta^*}
  \right) = -
  \int_{D\times\mathscr{D}} \log\left(\lambda_\theta(u,X)+\rho(u)\right)\;
  C_{\theta^*}^!(du,dX)\\ -
  \int_D \int_{\mathscr{D}}  \rho(u)    
  \log\left(\lambda_\theta(u,X)+\rho(u)\right)\; P_{\theta^*}(dX)du +
  \int_D\int_{\mathscr{D}} \lambda_\theta(u,X)\;P_{\theta^*}(dX)du\\+\ldots
\end{multline*}
Thus the difference \eqref{eq:magic:contrast:pp} can be written as
\begin{multline*}
  -\int_{D\times\mathscr{D}} \log\frac{
    \lambda_\theta(u,X)
  }{
    \lambda_\theta(u,X)+\rho(u)
  }\; C^!_{\theta^*}(du,dX) +
  \int_D \int_{\mathscr{D}}  \rho(u)    
  \log\left(\lambda_\theta(u,X)+\rho(u)\right)\; P_{\theta^*}(dX)du\\+\ldots
\end{multline*}
As a consequence the $M$ function introduced in \eqref{eq:contrast:campbell:kl:pp} can be redefined as follows
\begin{multline}
  \label{eq:contrast:campbell:kl:logistic:pp}
  M^\rho(\theta,\theta^*) =  
  \int_{D\times\mathscr{D}} \log\frac{
    \lambda_\theta(u,X)
  }{
    \left(\lambda_\theta(u,X)+\rho(u)\right)
  }\;
  C_{\theta^*}^!(du,dX)\\ -
  \int_D\int_{\mathscr{D}} 
    \rho(u) \log\left(\lambda_\theta(u,X)+\rho(u)\right)
  \;P_{\theta^*}(dX)du.
\end{multline}
The function $M^\rho$ has a global maximum $\theta=\theta^*$.
Below it is rewritten as an expectation in order to derive an unbiased estimator. The first term can be rewritten as
\begin{equation*}
  \int_{D\times\mathscr{D}} \log\frac{
    \lambda_\theta(u,X)
  }{
    \left(\lambda_\theta(u,X)+\rho(u)\right)
  }\;
  C_{\theta^*}^!(du,dX) = 
  \mean_{\theta^*} \sum_{x\in \mathbf{X}}
  \log\frac{
    \lambda_\theta(x,\mathbf{X}\setminus\{x\})
  }{
    \lambda_\theta(x,\mathbf{X}\setminus\{x\})+\rho(x)
  },
\end{equation*}
where $\mean_{\theta^*}$ is the mean when $\mathbf{X}\sim P_{\theta^*}$.
In order to rewrite the second term, consider a point process $\mathbf{Y}$ on $D$ with intensity $\rho$. Then we have
\begin{equation} 
  \label{eq:id:contrast:int2}
  \int_D\int_{\mathscr{D}} 
    \rho(u) \log\left(\lambda_\theta(u,X)+\rho(u)\right)
  \;P_{\theta^*}(dX)du  = 
  \mean \mean_{\theta^*} \sum_{y\in \mathbf{Y}}
  \log\left(\lambda_\theta(y,\mathbf{X})+\rho(y)\right),
\end{equation}
where the first mean is taken with respect to $\mathbf{Y}$.
Therefore an unbiased estimator of  $M^\rho$ is
\begin{equation}
  \label{eq:pre:lrl:pp}
  \sum_{x\in \mathbf{X}}
  \log\frac{
    \lambda_\theta(x,\mathbf{X}\setminus\{x\})
  }{
    \lambda_\theta(x,\mathbf{X}\setminus\{x\})+\rho(x)
  } -
  \sum_{y\in \mathbf{Y}}  \log\left(\lambda_\theta(y,\mathbf{X})+\rho(y)\right).
\end{equation}
The unbiasedness of \eqref{eq:pre:lrl:pp} can be checked as follows:
\begin{itemize}
\item The mean with respect to $\mathbf{X}\sim P_{\theta^*}$ of the first sum is equal to the first integral in \eqref{eq:contrast:campbell:kl:logistic:pp} by definition of the reduced Campbell measure \eqref{eq:def:campbell}.
\item The mean with respect to both $\mathbf{Y}$ and $\mathbf{X}\sim P_{\theta^*}$ of the second sum is equal to the second integral in \eqref{eq:contrast:campbell:kl:logistic:pp} in view of identity \eqref{eq:id:contrast:int2}.
\end{itemize}

Let $\hat{\theta}$ be the estimator of $\theta^*$ that maximizes the above criterion with respect to $\theta$. It maximizes also
\begin{equation}
  \label{eq:lrl:pp}
  \lpl^\rho(\theta|X) = 
  \sum_{x\in X}
  \log\frac{
    \lambda_\theta(x,X\setminus\{x\})
  }{
    \lambda_\theta(x,X\setminus\{x\})+\rho(x)
  } +
  \sum_{y\in Y}  \log\frac{
    \rho(y)
  }{
    \lambda_\theta(y,X)+\rho(y)
  }
\end{equation}
since both criteria differ only by a term not depending on $\theta$. In order to obtain an even more homogeneous expression, assume that the conditional probability given $\mathbf{X}$ that $\mathbf{X}\cap \mathbf{Y}=\emptyset$ is zero. Then almost surely $\mathbf{X}=\mathbf{X}\setminus\{y\}$ and the estimating criterion can be rewritten as
\begin{equation}
  \label{eq:lrl:pp:final}
  \lpl^\rho(\theta|X) = 
  \sum_{x\in X}
  \log\frac{
    \lambda_\theta(x,X\setminus\{x\})
  }{
    \lambda_\theta(x,X\setminus\{x\})+\rho(x)
  } +
  \sum_{y\in Y}  \log\frac{
    \rho(y)
  }{
    \lambda_\theta(y,X\setminus\{y\})+\rho(y).
  }
\end{equation}
The estimation criterion above was first proposed by Baddeley et al. \cite{ref/1247}. Our derivation here is different: based on a contrast function rather than a score function.

Thus we get the log-likelihood of a binomial regression. The sample to be considered is  $t_z,z\in X\cup Y$ with $t_z=1$ when $z\in X$. And the success probabilities are of the form
\begin{equation*}
  \frac{
    \lambda_\theta(z,X\setminus\{z\})
  }{
    \lambda_\theta(z,X\setminus\{z\})+\rho(z)
  }.
\end{equation*}
  
Let us focus on the case of an exponential family. Then we have
\begin{eqnarray*}
  \prob\left[t_z=1\right] & = &
  \frac{
    \exp\left(\theta^Tt(z,X\setminus\{z\})\right)
  }{
    \exp\left(\theta^Tt(z,X\setminus\{z\})\right)+\rho(z)
  },\\
  & = &
  \frac{1}{1+\exp\left(\log\rho(z)-\theta^Tt(z,X\setminus\{z\}\right)}.
\end{eqnarray*}
That is
\begin{equation*}
  \logit\prob\left[t_z=1\right] =
  \log\rho(z)-\theta^Tt(z,X\setminus\{z\}).
\end{equation*}
Therefore in order to compute $\hat{\theta}$ one can use computing tools already available for the binomial regression with a logistic link function.
\section{Conclusion}
\label{sec-4}

In our paper \cite{ref/1242}, a new class of stochastic models for planar tessellations was introduced. Theoretical concepts and results mainly inspired by point process theory were provided. Also a Metropolis-Hastings-Green type algorithm was proposed for simulating Gibbs models. The current report is a first step towards parametric inference from observed data. 

The key ingredient for introducing the pseudolikelihood for Gibbsian T-tessellations is the Kullback-Leibler divergence and contrast function theory. Applying such an approach to the density $h$ would lead to the classical likelihood which is unpracticable because of the unknown normalizing function involved in $h$. The trick here is to consider Campbell measures instead. As shown in this report,  applying a similar approach to point process theory yields the well-known pseudolikelihood introduced by Besag \cite{ref/1243} who discretized point processes into spatial random fields with discrete supports. It was also shown that the approximation of the maximum pseudolikelihood based on a binomial regression \cite{ref/1247} can be derived from a contrast function involving the Kullback-Leibler divergence and the Campbell measure. 

Whether such an approach can be extended to other types of random spatial structures is a open question. In our opinion, the key point is to identify basic operators that allow to explore the whole space of spatial structures. Based on such operators, one may define one or several Campbell measures leading in turn to a pseudolikelihood.

Alternative inference methods are also of interest. As a variant, one could consider two contrast functions associated with splits and merges on one side and on flips on the other side. This would lead to two pseudolikelihoods and two estimators. Another possibility is Monte-Carlo Maximum Likelihood (MCML), see e.g. \cite{ref/1178}. Implementing MCML is possible using the simulation algorithm proposed in \cite{ref/1242}. Bayesian approaches would deserve some investigations too.

In this report, the behaviour of the maximum pseudolikelihood estimator was investigated by means of simulations showing that bias and dispersion decrease as the number of cells in the observed tessellation increases. Theoretical studies would be of interest too, in particular asymptotics. However such studies require the development of a theoretical framework where random T-tessellation would be extended to the whole plane (currently random T-tessellations are bounded to a compact polygon).

The intriguing behaviour of the maximum pseudolikelihood when fitting a wrong model to observed data should be investigated either theoretically or empirically.

Applying the maximum pseudolikelihood to real data may require some preprocessing. As an example, consider the maximum pseudolikelihood for the CRTT model. Its computation requires the determination of the number of non-blocking segments and of the total cell perimeter. In practice, a T-tessellation is often provided as a list of polygons (cells). Therefore one has to compute the number of non-blocking segments from the list of polygons. This is not a trivial task both because real tessellations may not be exact T-tessellations and because of unavoidable numerical errors related to number representations in digital computing. The library LiTe provides some methods which may help for tranforming sets of polygons into T-tessellations.

\bibliographystyle{abbrv}
\bibliography{pseudo}
\appendix
\section{Extension of the Kullback-Leibler divergence to non probability measures}
\label{sec-5}

\label{sec:kl}
Let $\alpha$ and $\beta$ be two finite non-negative measures on a measurable space $\mathcal{E}$. Let the extended Kullback-Leibler divergence of $\beta$ from $\alpha$ be defined as
\begin{equation}
  \label{eq:def:KL}
  \divkl(\alpha,\beta) =
  \left\{
  \begin{array}{ll}
    \displaystyle\int_{\mathcal{E}} 
    \left(
      \frac{d\alpha}{d\beta}(z)\log\frac{d\alpha}{d\beta}(z) +
       1- \frac{d\alpha}{d\beta}(z)
    \right)
    \;\beta(dz), & \text{if }\beta\gg\alpha,\\
    &\\
    +\infty & \text{otherwise}.
  \end{array}
  \right.
\end{equation}
In the equation above, $0\log 0$ is assumed to be $0$. If $\alpha$ and $\beta$ have the same total mass, Equation \eqref{eq:def:KL} simplifies into
\begin{eqnarray*}
  \divkl(\alpha,\beta) 
   & = & \int_{\mathcal{E}} 
  \frac{d\alpha}{d\beta}(z)\log\frac{d\alpha}{d\beta}(z)\; \beta(dz).
\end{eqnarray*}
The simplification above applies especially when $\alpha$ and $\beta$ are probability measures. The classical Kullback-Leibler divergence is a particular case of the extended one.

Since the function
  \begin{equation*}
  x\rightarrow x\log x+1-x
\end{equation*}
is non-negative, the extended Kullback-Leibler divergence defined in Equation \eqref{eq:def:KL} is also non-negative. Furthermore the function above cancels only at $x=1$. It follows that the extended Kullback-Leibler divergence cancels only when $\alpha$ equals $\beta$ except on a $\beta$-null subset of $\mathcal{E}$. 
   
A small lemma that will be used further in this report.
\begin{lemma}
  \label{lemma:divergence:shift}
  Let $\alpha$, $\beta$ and $\gamma$ be non-negative finite measures on a measurable space $\mathcal{E}$. Then the following inequality holds
  \begin{equation*}
    \divkl(\alpha,\beta)\geq \divkl(\alpha+\gamma,\beta+\gamma).
  \end{equation*}
\end{lemma}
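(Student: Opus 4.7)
My plan is to reduce the lemma to a pointwise inequality on Radon--Nikodym densities with respect to a common dominating measure, and then to recognize that pointwise inequality as (a two-term instance of) the log-sum inequality.

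First I would dispose of the degenerate case. If $\beta$ does not dominate $\alpha$, then by definition $\divkl(\alpha,\beta)=+\infty$ and the inequality is trivial. So assume $\beta\gg\alpha$. Since $\beta+\gamma\geq\beta$ (in the sense of measures) and $\beta+\gamma\geq\gamma$, this forces $\beta+\gamma\gg\alpha$ and $\beta+\gamma\gg\gamma$, hence $\beta+\gamma\gg\alpha+\gamma$; so the right-hand side is also given by the integral expression in \eqref{eq:def:KL} rather than by the convention $+\infty$.

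Next I would fix a common dominating measure, for example $\lambda=\alpha+\beta+\gamma$, and set $a=d\alpha/d\lambda$, $b=d\beta/d\lambda$, $c=d\gamma/d\lambda$, each a non-negative measurable function. Writing $\phi(x)=x\log x+1-x$ (with $0\log 0=0$), the identity $b\,\phi(a/b)=a\log(a/b)+b-a$ on the set $\{b>0\}$ (and the convention that this vanishes where $b=0$, which forces $a=0$ by absolute continuity) gives
\begin{equation*}
\divkl(\alpha,\beta)=\int\bigl(a\log(a/b)+b-a\bigr)\,d\lambda,
\qquad
\divkl(\alpha+\gamma,\beta+\gamma)=\int\bigl((a+c)\log\tfrac{a+c}{b+c}+b-a\bigr)\,d\lambda.
\end{equation*}
The two ``$b-a$'' terms cancel when one subtracts, so the lemma reduces to the pointwise inequality
\begin{equation*}
a\log(a/b)\;\geq\;(a+c)\log\bigl((a+c)/(b+c)\bigr)
\end{equation*}
for all $a,b,c\geq 0$ (with the usual conventions when any of $a,b,b+c$ vanish).

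I would establish this pointwise inequality by the log-sum inequality applied to the two pairs $(a,b)$ and $(c,c)$: since $c\log(c/c)=0$, the log-sum inequality gives exactly
\begin{equation*}
a\log(a/b)=a\log(a/b)+c\log(c/c)\;\geq\;(a+c)\log\bigl((a+c)/(b+c)\bigr).
\end{equation*}
Alternatively, one can prove it directly by fixing $a,b$ and checking that $h(c):=(a+c)\log\bigl((a+c)/(b+c)\bigr)$ satisfies $h'(c)=\log u+1-u$ with $u=(a+c)/(b+c)$, which is $\leq 0$ by the elementary inequality $\log u\leq u-1$; so $h$ is non-increasing in $c$ and $h(c)\leq h(0)=a\log(a/b)$. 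Integrating the pointwise inequality against $\lambda$ finishes the proof.

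The only real obstacle is the bookkeeping on the null sets: one has to check that the convention $0\log 0=0$ together with the absolute continuity $\beta\gg\alpha$ makes every boundary case ($b=0$, $a=0$, $b+c=0$) compatible with the pointwise inequality, so that integration against $\lambda$ is legitimate. Everything else is a direct consequence of the log-sum inequality.
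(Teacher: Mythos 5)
Your proposal is correct and follows essentially the same route as the paper: handle the non-dominating case trivially, pass to densities $a,b,c$ with respect to a common dominating measure, cancel the $b-a$ terms, and reduce to the pointwise inequality $a\log(a/b)\geq(a+c)\log\bigl((a+c)/(b+c)\bigr)$. Your invocation of the log-sum inequality is just a packaged form of the elementary fact the paper verifies by hand, and your alternative derivative computation $h'(c)=\log u+1-u\leq 0$ is literally the paper's directional-derivative argument along $(1,1)$.
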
 
The result is rather intuitive: $\alpha+\gamma$ and $\beta+\gamma$ are more similar than $\alpha$ and $\beta$. 
\begin{proof}
  First consider the case where $\beta$ does not dominate $\alpha$. Then $\divkl(\alpha,\beta)=+\infty$ and the inequality holds.

  Now assume that $\beta\gg\alpha$. Let $E$ be a measurable subset $E\subset\mathcal{E}$ such that $(\beta+\gamma)(E)=0$. Both $\beta$ and $\gamma$ cancel on $E$. Since $\beta\gg\alpha$, $\alpha$ cancels on $E$ too. It follows that $(\alpha+\gamma)(E)=0$. Hence $(\beta+\gamma)(E)=0$ implies that $(\alpha+\gamma)(E)=0$, i.e. $\beta+\gamma\gg\alpha+\gamma$.

  Let $\mu$ be a measure dominating $\alpha$, $beta$ and $\gamma$. Let $a$, $b$ and $c$ be the densities of $\alpha$, $\beta$ and $\gamma$ with respect to $\mu$. We have
  \begin{eqnarray*}
    \lefteqn{\divkl(\alpha+\gamma,\beta+\gamma)}\\ & = &
    \int_{\mathcal{E}} 
    \left(\frac{a(z)+c(z)}{b(z)+c(z)}
    \log\frac{a(z)+c(z)}{b(z)+c(z)} +
    1- \frac{a(z)+c(z)}{b(z)+c(z)}\right)
    \left(b(z)+c(z)\right)\;\mu(dz),\\
    & = &
    \int_{\mathcal{E}} 
    \left((a(z)+c(z))
    \log\frac{a(z)+c(z)}{b(z)+c(z)} +
    b(z)-a(z)\right)\;\mu(dz).
  \end{eqnarray*} 
  The difference between divergences can be written as
  \begin{eqnarray*}
    \lefteqn{
      \divkl(\alpha,\beta)-\divkl(\alpha+\gamma,\beta+\gamma)
    }\\ & = &
    \int_{\mathcal{E}} 
    \left(
      a(z) \log\frac{a(z)}{b(z)} -
      (a(z)+c(z)) \log\frac{a(z)+c(z)}{b(z)+c(z)}
    \right) \;d\mu(z).
  \end{eqnarray*}
  We have to check that the integrand is non-negative. The non-negativity results from the following inequality
  \begin{equation*}
    x\log\frac{x}{y} \geq (x+h)\log\frac{x+h}{y+h},\quad\text{for all }
    h\geq 0.
  \end{equation*}
  The latter inequality is a consequence of the decreasing of the function
  \begin{equation*}
    \begin{pmatrix} x\\y \end{pmatrix}
    \rightarrow x\log\frac{x}{y}
  \end{equation*}
  when moving away from the diagonal $y=x$. The decreasing is proved by
  \begin{equation*}
    \sprod{
      \nabla x\log\frac{x}{y}
    }{
      \begin{pmatrix}1\\1\end{pmatrix}
    } =
    \sprod{
      \begin{pmatrix}
        \log\frac{x}{y}+1\\ -\frac{x}{y}
      \end{pmatrix}
    }{
      \begin{pmatrix}1\\1\end{pmatrix}
    } =
    1-\frac{x}{y}+\log\frac{x}{y}\leq 0.
  \end{equation*}
  Above $\sprod{.}{.}$ denotes the scalar product.
\end{proof}
\section{Subconfigurations of a Poisson point process}
\label{sec-6}

\label{sec:subconfig:poisson}
\begin{lemma}\label{lem:subconfig:poisson}
  Let $\mathbf{X}$ be a Poisson point process with intensity measure $\nu$ on a space $D$. The total mass $\nu(D)$ is supposed to be finite. Then, for any measurable real-valued function $\phi$ defined on the space of finite point patterns in $D$, we have
  \begin{equation}
    \label{eq:mean:subconfig:poisson}
    \mean \sum_{Y\subset \mathbf{X}}\phi(Y) =
    \exp(\nu(D))\mean \phi(\mathbf{X}).
  \end{equation}
\end{lemma}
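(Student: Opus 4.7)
The plan is to exploit the standard representation of the finite Poisson point process: the total number $N$ of points of $\mathbf{X}$ is Poisson-distributed with parameter $\nu(D)$, and conditionally on $N=n$ the points $X_1,\ldots,X_n$ are i.i.d.\ with common distribution $\nu/\nu(D)$. Under this conditional distribution, exchangeability will make it harmless to pass from ordered tuples to unordered subsets.

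First I would condition on $N=n$ and expand the inner sum over subsets of $\mathbf{X}$ by grouping them according to their cardinality $k$. By exchangeability, each of the $\binom{n}{k}$ subsets of size $k$ contributes the same quantity $f(k):=\mean\phi(\{X_1,\ldots,X_k\})$, where the $X_i$ are i.i.d.\ with law $\nu/\nu(D)$. This rewrites the left-hand side of \eqref{eq:mean:subconfig:poisson} as
\begin{equation*}
  \sum_{n=0}^{\infty} \frac{e^{-\nu(D)}\nu(D)^n}{n!} \sum_{k=0}^{n} \binom{n}{k}\, f(k).
\end{equation*}

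Next I would exchange the two summations (justified by Tonelli when $\phi\ge 0$, or by the assumed integrability in general), set $m=n-k$, and use that $\sum_{m\ge 0} e^{-\nu(D)}\nu(D)^m/m! = 1$. The double sum then collapses to $\sum_{k\ge 0} \nu(D)^k f(k)/k!$.

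Finally I would apply the same conditioning to the right-hand side of \eqref{eq:mean:subconfig:poisson}:
\begin{equation*}
  \exp(\nu(D))\,\mean \phi(\mathbf{X}) \;=\; e^{\nu(D)} \sum_{k=0}^{\infty} \frac{e^{-\nu(D)}\nu(D)^k}{k!}\, f(k) \;=\; \sum_{k=0}^{\infty} \frac{\nu(D)^k}{k!}\, f(k),
\end{equation*}
which matches the expression obtained above. There is no real obstacle here; the only point requiring a line of justification is the exchange of summations, which is routine given the finiteness assumption on $\nu(D)$ and integrability of $\phi$.
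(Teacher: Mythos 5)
Your proposal is correct and follows essentially the same route as the paper's proof: the paper expands $\mean\sum_{Y\subset\mathbf{X}}\phi(Y)$ via the explicit Poisson series $e^{-\nu(D)}\sum_n\frac{1}{n!}\int_{D^n}$, groups index subsets by cardinality, notes the contribution depends only on the size (your exchangeability step), pulls out the binomial coefficient, and resums the exponential series — exactly the computation you describe in the language of conditioning on $N$. The only cosmetic difference is that you normalize to i.i.d.\ points with law $\nu/\nu(D)$, whereas the paper keeps the unnormalized factor $\nu(D)^{n-m}$ explicit; these are the same bookkeeping.
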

\begin{proof}
  Using the general expression of the distribution of a Poisson point process, one gets
  \begin{equation*}
    \mean\sum_{Y\in\mathbf{X}}\phi(Y)  = 
    e^{-\nu(D)}\sum_{n\geq 0}\frac{1}{n!}
    \int_{D^n}\sum_{Y\subset\{x_1,\ldots,x_n\}}\phi(Y)\;
    \nu^n(dx_1\ldots dx_n).
  \end{equation*}
  A subset of a point pattern can be mapped into a subset of indices:
  \begin{equation*}
    \mean\sum_{Y\in\mathbf{X}}\phi(Y)  = 
    e^{-\nu(D)}\sum_{n\geq 0}\frac{1}{n!} \sum_{I\subset\{1,\ldots,n\}}
    \int_{D^n}\phi(\{x_i:i\in I\})\;
    \nu^n(dx_1\ldots dx_n).
  \end{equation*}
  Grouping subsets by sizes yields
  \begin{multline*}
    \mean\sum_{Y\in\mathbf{X}}\phi(Y) =  
    e^{-\nu(D)}\sum_{n\geq 0}\frac{1}{n!} \sum_{m=0}^n 
    \sum_{\substack{I\subset\{1,\ldots,n\}\\|I|=m}} \nu(D)^{n-m}\\
    \int_{D^m}\phi(\{x_1,\ldots,x_m\})\;
    \nu^m(dx_1\ldots dx_m).
  \end{multline*}
  The summand does not depend on $I$. Therefore the sum over $I$ is just equal to the number of subsets of $m$ elements from a $n$-size set:
  \begin{eqnarray*}
    \lefteqn{\mean\sum_{Y\in\mathbf{X}}\phi(Y)}\\
    & = & e^{-\nu(D)}\sum_{n\geq 0}\frac{1}{n!} \sum_{m=0}^n 
    \frac{n!\nu(D)^{n-m}}{m!(n-m)!} 
    \int_{D^m}\phi(\{x_1,\ldots,x_m\})\;
    \nu^m(dx_1\ldots dx_m),\\
    & = & e^{-\nu(D)}\sum_{m\geq 0}\frac{e^{\nu(D)}}{m!} 
    \int_{D^m}\phi(\{x_1,\ldots,x_m\})\;
    \nu^m(dx_1\ldots dx_m),\\
    & = & e^{\nu(D)} \mean\phi(\mathbf{X}).
  \end{eqnarray*}
\end{proof}
\section{A random T-tessellation is determined by its Papangelou conditional intensities}
\label{sec-7}

\label{sec:charac:papangelou}
Below a proof of Proposition \ref{prop:sf:papangelou:determines:dist} is given. From equations (\ref{eq:split:papangelou}--\ref{eq:flip:papangelou}), it follows that
\begin{equation*}
  \left\{
    \begin{array}{rcl}
      h(sT) & = & \lambda_{\text{s}}(s,T)h(T),\\
      h(fT) & = & \lambda_{\text{f}}(f,T)h(T),
    \end{array}
  \right.
\end{equation*}
for $\mu$-almost all $T\in\mathcal{T}$ and almost all $s\in\mathbb{S}_T$ and all $f\in\mathbb{F}_T$. That is there exists a null set $E_{\text{s}}\subset\mathscr{C}_{\text{s}}$ with respect to the measure $ds\mu(dT)$ such that $h(sT)=\lambda_{\text{s}}(s,T)h(T)$ for all $(s,T)\notin E_\text{s}$. Similarly there exists a null set $E_{\text{f}}\subset\mathscr{C}_\text{f}$ with respect to the measure $df\mu(dT)$ such that $h(fT)=\lambda_{\text{f}}(f,T)h(T)$ for all $(f,T)\notin E_\text{f}$.

Let
\begin{equation*}
  G = \left\{
    T\in\mathcal{T}: \exists\tilde{T}\in\bigcup_{\tilde{L}\subset L(T)}\mathcal{T}(\tilde{L}), \exists o\in\mathbb{M}_{\tilde{T}}\cup\mathbb{F}_{\tilde{T}}, (o^{-1},o\tilde{T})\in E_{\text{s}}\cup E_{\text{f}} \right\},
\end{equation*}
where $L(T)$ is the pattern of lines supporting the segments of $T$.
The union
\begin{equation*}
  \bigcup_{\tilde{L}\subset L(T)}\mathcal{T}(\tilde{L})
\end{equation*}
is the subset of T-tessellations that is spanned by applying successive merges and flips to $T$ (remember that a merge remove a line supporting a segment and that a flip does not change the supporting lines).

Let $T\in\mathcal{T}$. There exists a sequence of splits and flips $o_n,\ldots,o_1$ which transform gradually the empty tessellation $D$ into $T$: $D=T_n,o_nT_n=T_{n-1},\ldots,o_2T_2=T_1,o_1T_1=T_0$ with $T_0=T$. Note that for $i=1,\ldots,n$, $o_i\in\mathbb{S}_{T_i}\cup\mathbb{F}_{T_i}$ i.e. $o_i^{-1}\in\mathbb{M}_{T_{i-1}}\cup\mathbb{F}_{T_{i-1}}$. Suppose that $T\notin G$.
\begin{equation*}
  T_0\notin G\Leftrightarrow \forall\tilde{T}\in\bigcup_{\tilde{L}\subset L(T)}\mathcal{T}(\tilde{L}),\forall o\in\mathbb{M}_{\tilde{T}}\cup\mathbb{F}_{\tilde{T}}, (o^{-1},o\tilde{T})\notin E_{\text{s}}\cup E_{\text{f}}.
\end{equation*}
The right-hand side statement holds for $\tilde{T}=T_0$. It follows 
\begin{equation*}
  T_0\notin G\Rightarrow \forall o\in\mathbb{M}_{T_0}\cup\mathbb{F}_{T_0}, (o^{-1},oT_0)\notin E_{\text{s}}\cup E_{\text{f}}.
\end{equation*}
The right-hand side statement holds for $o=o_1$:
\begin{eqnarray*}
  T_0\notin G & \Rightarrow & (o_1^{-1},o_1T_0)\notin E_{\text{s}}\cup E_{\text{f}},\\
& \Rightarrow & h(T_0) = \lambda_{t_1}(o_1,o^{-1}_1T_0)h(o_1^{-1}T_0),\\
& \Rightarrow & h(T_0) = \lambda_{t_1}(o_1,T_1)h(T_1),
\end{eqnarray*}
where for $i=1,\ldots,n$
\begin{equation*}
  t_i = \left\{
    \begin{array}{rl}
      \text{s} & \text{if }o_i\text{ is a split},\\
      \text{f} & \text{if }o_i\text{ is a flip}.
    \end{array}
  \right.
\end{equation*}
Repeating this computation, one gets
\begin{equation}
  \label{eq:ttessel:density:from:papangelou}
  h(T) = h(D)\prod_{i=1}^n \lambda_{t_i}(o_i,T_i).
\end{equation}

Now we show that $G$ is a $\mu$-null set. Let
\begin{eqnarray*}
  G_{\text{s}} & = & \left\{ T\in\mathcal{T} : \exists\tilde{T}\in\bigcup_{\tilde{L}\subset L(T)}\mathcal{T}(\tilde{L}), \exists o\in\mathbb{M}_{\tilde{T}}, (o^{-1},o\tilde{T})\in E_{\text{s}} \right\},\\
  G_{\text{f}} & = & \left\{
    T\in\mathcal{T}: \exists\tilde{T}\in\bigcup_{\tilde{L}\subset L(T)}\mathcal{T}(\tilde{L}), \exists o\in\mathbb{F}_{\tilde{T}}, (o^{-1},o\tilde{T})\in E_{\text{f}} \right\}.
\end{eqnarray*}
Obviously $G=G_{\text{s}}\cup G_{\text{f}}$. Let
\begin{equation*}
  F_{\text{s}} = \left\{
    \tilde{T}\in\mathcal{T}:
    \exists m\in\mathbb{M}_T, (m^{-1},m\tilde{T})\in E_{\text{s}}
  \right\}.
\end{equation*}
We have
\begin{equation*}
  T\in G_{\text{s}} \Leftrightarrow
  \exists \tilde{T}\in\bigcup_{\tilde{L}\subset L(T)}\mathcal{T}(\tilde{L}),
  \tilde{T}\in F_{\text{s}}.
\end{equation*}
Therefore
\begin{equation*}
  \mu(G_{\text{s}})  \leq  \mean \sum_{\tilde{T}\in\bigcup_{\tilde{L}\subset L(\mathbf{T})}\mathcal{T}(\tilde{L})} I_{F_{\text{s}}}(\tilde{T}),\quad \mathbf{T}\sim\mu
\end{equation*}
Using Definition \ref{def:crtt}, one can rewrite the right-hand side as a mean with respect to the unit Poisson line process $\mathbf{L}$:
\begin{eqnarray*}
  \mu(G_{\text{s}}) & \leq & \text{cte} \mean \sum_{T\in\mathcal{T}(\mathbf{L})}
  \sum_{\tilde{T}\in\bigcup_{\tilde{L}\subset L(T)}\mathcal{T}(\tilde{L})} I_{F_{\text{s}}}(\tilde{T}),\\
  & \leq & \text{cte} \mean \sum_{T\in\mathcal{T}(\mathbf{L})}
  \sum_{\tilde{T}\in\bigcup_{\tilde{L}\subset \mathbf{L}}\mathcal{T}(\tilde{L})} I_{F_{\text{s}}}(\tilde{T}),\\
  & \leq & \text{cte} \mean |\mathcal{T}(\mathbf{L})| 
  \sum_{\tilde{T}\in\bigcup_{\tilde{L}\subset \mathbf{L}}\mathcal{T}(\tilde{L})} I_{F_{\text{s}}}(\tilde{T}).
\end{eqnarray*}
The constant before the mean is the unknown normalizing constant of the distribution $\mu$. Now let us show that the random variable
\begin{equation*}
  \sum_{\tilde{T}\in\bigcup_{\tilde{L}\subset \mathbf{L}}\mathcal{T}(\tilde{L})} I_{F_{\text{s}}}(\tilde{T})
\end{equation*}
is null almost surely. Since it is non-negative, it is sufficient to show that its mean is null. Since the union is disjoint, we have
\begin{equation*}
  \mean \sum_{\tilde{T}\in\bigcup_{\tilde{L}\subset \mathbf{L}}\mathcal{T}(\tilde{L})} I_{F_{\text{s}}}(\tilde{T})  =  
  \mean \sum_{\tilde{L}\subset\mathbf{L}} 
  \sum_{\tilde{T}\in\mathcal{T}(\tilde{L})} I_{F_{\text{s}}}(\tilde{T}).
\end{equation*}
Applying Lemma \ref{lem:subconfig:poisson} (see Appendix \ref{sec:subconfig:poisson}), we can rewrite the mean of the sum on the line subsets:
\begin{equation*}
  \mean \sum_{\tilde{T}\in\bigcup_{\tilde{L}\subset \mathbf{L}}\mathcal{T}(\tilde{L})} I_{F_{\text{s}}}(\tilde{T})  =  \exp(b(D)/\pi)
  \mean \sum_{\tilde{T}\in\mathcal{T}(\mathbf{L})} I_{F_{\text{s}}}(\tilde{T}),
\end{equation*}
where $b(D)$ is the perimeter of the domain $D$.
Therefore it follows from Definition \ref{def:crtt}
\begin{equation*}
  \mean \sum_{\tilde{T}\in\bigcup_{\tilde{L}\subset \mathbf{L}}\mathcal{T}(\tilde{L})} I_{F_{\text{s}}}(\tilde{T})  =  \exp(b(D)/\pi) \mu(F_{\text{s}})
\end{equation*}
From the definition of $F_{\text{s}}$ we have
\begin{eqnarray*}
  \mu(F_{\text{s}}) & \leq & \mean \sum_{m\in\mathbb{M}_\mathbf{T}} 
  I_{E_{\text{s}}}(m^{-1},mT),\quad
  \mathbf{T}\sim\mu,\\
  & \leq & \int_{\mathcal{T}}\int_{\mathbb{S}_T} I_{E_{\text{s}}}(s,T)\;
  ds\mu(dT),\\
  & \leq & 0.
\end{eqnarray*}
Hence
\begin{equation*}
  \mean \sum_{\tilde{T}\in\bigcup_{\tilde{L}\subset \mathbf{L}}\mathcal{T}(\tilde{L})} I_{F_{\text{s}}}(\tilde{T})  = 0,
\end{equation*}
and
\begin{equation*}
  \sum_{\tilde{T}\in\bigcup_{\tilde{L}\subset \mathbf{L}}\mathcal{T}(\tilde{L})} I_{F_{\text{s}}}(\tilde{T})
\end{equation*}
is null almost surely. In turn, this implies that
\begin{equation*}
  \mean |\mathcal{T}(\mathbf{L})| 
  \sum_{\tilde{T}\in\bigcup_{\tilde{L}\subset \mathbf{L}}\mathcal{T}(\tilde{L})} I_{F_{\text{s}}}(\tilde{T}) = 0
\end{equation*}
and that $\mu(G_{\text{s}})=0$. The proof for $\mu(G_{\text{f}})=0$ is similar.

In Equation \eqref{eq:ttessel:density:from:papangelou}, the density $h$ is determined only up to the multiplicative constant $h(D)$. It should be noticed that $h(D)$ is necessarily positive (otherwise $h$ would be null on $\mathcal{T}$ due to its hereditariness). The constant $h(D)$ is determined from the fact that the integral of $h$ on $\mathcal{T}$ is equal to 1.

\end{document}